\def\BibTeX{{\rm B\kern-.05em{\sc i\kern-.025em b}\kern-.08em
    T\kern-.1667em\lower.7ex\hbox{E}\kern-.125emX}}
\newtheorem{theorem}{Theorem}
\newtheorem{prop}[theorem]{Proposition}
\newtheorem*{remark}{Remark}
\renewenvironment{remark}{%
  \par\medskip\noindent%
  \textnormal{\textbf{Remark.}}\ \normalfont%
}{%
  \medskip\par%
}
\begin{document}

\bstctlcite{IEEEexample:BSTcontrol}
\title{Problem-Driven Scenario Reduction Framework \\ for Power System Stochastic Operation}

\author{\IEEEauthorblockN{Yingrui Zhuang, \textit{Student Member, IEEE}}, \IEEEauthorblockN{Lin Cheng, \textit{Senior Member, IEEE}},
\IEEEauthorblockN{Ning Qi, \textit{Member, IEEE}},
\\ \IEEEauthorblockN{Mads R. Almassalkhi, \textit{Senior Member, IEEE}},
 \IEEEauthorblockN{Feng Liu, \textit{Senior Member, IEEE}}

\thanks{
This work is supported in part by the National Key Research and Development Program of China (No. 2021YFE0191000), 
National Natural Science Foundation of China (No. 52037006), and China Postdoctoral Science Foundation special funded project (No. 2023TQ0169). Mads graciously recognizes support from the National Science Foundation (NSF) Award ECCS-2047306.
Paper No. TPWRS-XXXXX-2024. (\textit{Corresponding author}: Yingrui Zhuang.)

Yingrui Zhuang, Lin Cheng and Feng Liu are with the Department of Electrical Engineering, Tsinghua University, Beijing 100084, China (e-mail: zyr21@mails.tsinghua.edu.cn).

Ning Qi is with the Department of Earth and Environmental Engineering, Columbia University, NY 10027,
USA (e-mail: nq2176@columbia.edu). 

Mads R. Almassalkhi is with the Department of Electrical and Biomedical Engineering, University of Vermont, Burlington VT 05405, USA (e-mail: malmassa@uvm.edu).}

}

\markboth{IEEE TRANSACTIONS ON Power Systems,~Vol.~X, No.~X, XX Month~2024}
{How to Use the IEEEtran \LaTeX \ Templates}

\maketitle


\begin{abstract}
    Scenario reduction (SR) aims to identify a small yet representative scenario set to depict the underlying uncertainty, 
    which is critical to scenario-based stochastic optimization (SBSO) of power systems.
    Existing SR techniques commonly aim to achieve statistical approximation to the original scenario set.
    However, SR and SBSO are commonly considered as two distinct and decoupled processes,
    which cannot guarantee a superior approximation of the original optimality.
    Instead, this paper incorporates the SBSO problem structure into the SR process and 
    introduces a novel problem-driven scenario reduction (PDSR) framework.
    Specifically, we project the original scenario set in distribution space 
    onto the mutual decision applicability between scenarios in problem space.
    Subsequently, the SR process, 
    embedded by a distinctive problem-driven distance metric, 
    is rendered as a mixed-integer linear programming formulation 
    to obtain the representative scenario set while minimizing the optimality gap. 
    Furthermore, \textit{ex-ante} and \textit{ex-post} problem-driven evaluation indices are proposed 
    to evaluate the SR performance.
    Numerical experiments on two two-stage stochastic economic dispatch problems validate the effectiveness of PDSR,
    and demonstrate that PDSR significantly outperforms existing SR methods by identifying salient (e.g., worst-case) scenarios, and achieving an optimality gap of less than 0.1\% within acceptable computation time.
\end{abstract}
\begin{IEEEkeywords}
Problem-driven, scenario reduction, stochastic optimization, worst-case scenario, risk management
\end{IEEEkeywords}
\mbox{}

\renewcommand{\nomgroup}[1]{%
\vspace{0.3cm}
	\item[\textbf{%
		\ifthenelse{\equal{#1}{A}}{\textit{A.\ Abbreviations}}{}%
		\ifthenelse{\equal{#1}{B}}{\textit{B.\ Sets}}{}%
		\ifthenelse{\equal{#1}{C}}{\textit{C.\ Parameters}}{}%
		\ifthenelse{\equal{#1}{D}}{\textit{D.\ Variables}}{}%
  	\ifthenelse{\equal{#1}{E}}{\textit{E.\ Functions}}{}%
	}]%
}

\nomenclature[A]{ES}{Energy storage}
\nomenclature[A]{SoC}{State of charge}
\nomenclature[A]{ADN}{Active distribution network}
\nomenclature[A]{RES}{Renewable energy sources}
\nomenclature[A]{MILP}{Mixed-integer linear programming}
\nomenclature[A]{SBSO}{Scenario-based stochastic optimization}
\nomenclature[A]{TSSO}{Two-stage stochastic optimization}
\nomenclature[A]{SR}{Scenario reduction}
\nomenclature[A]{OG}{Optimality gap}
\nomenclature[A]{PDSR}{Problem-driven scenario reduction}
\nomenclature[A]{DDSR}{Distribution-driven scenario reduction}
\nomenclature[A]{PDD}{Problem-driven distance}
\nomenclature[A]{SPDD}{Sum of problem-driven distance}
\nomenclature[A]{SE}{Scenario effectiveness}

\nomenclature[B]{$\boldsymbol{\zeta}$}{Representative scenario set}
\nomenclature[B]{$\boldsymbol{\xi}$}{Original scenario set}
\nomenclature[B]{$\varOmega_\mathrm{E}/\varOmega_\mathrm{R}/\varOmega_\mathrm{L}$}{Set of nodes with ES/RES/Load}
\nomenclature[B]{$\varOmega_\mathrm{B}$}{Set of nodes}
\nomenclature[C]{$\Delta t/T$}{Scheduling time interval/number of time steps in a scheduling cycle}
\nomenclature[C]{$\eta_j^{\mathrm{c}}/\eta_j^{\mathrm{d}}$}{Charge/discharge efficiency of $j$-th ES}
\nomenclature[C]{$r_{ij}/x_{ij}$}{Line resistance/reactance of line $ij$}
\nomenclature[C]{$\overline{\mathcal{V}_{i}}/\underline{\mathcal{V}_{i}}$}{Upper/lower bound of squared voltage magnitude}
\nomenclature[C]{$\pi_{j}^{\mathrm{E}}$}{Capacity procurement price of $j$-th ES}
\nomenclature[C]{$P_{i,s,t}^{\mathrm{R}}/P_{i,s,t}^{\mathrm{L}}$}{RES generation/load demand power at node $i$}
\nomenclature[C]{$\pi^{\mathrm{R,c}}/\pi^{\mathrm{L,s}}$}{Penalty price of RES curtailment/load shedding}
\nomenclature[C]{$\pi_{s,t}^{\mathrm{T}}$}{Day-ahead trading price}
\nomenclature[C]{$\pi_{s,t}^{\mathrm{T+}}/\pi_{s,t}^{\mathrm{T-}}$}{Intraday up-regulation/down-regulation price}
\nomenclature[C]{$\gamma_i$}{Probability of scenario $\xi_i$}

\nomenclature[C]{$\underline{SoC}/\overline{SoC}$}{Lower/upper bound of SoC}
\nomenclature[C]{$\overline{P^{\mathrm{T}}}$}{Upper bound of trading power}
\nomenclature[C]{$\overline{P_j^{\mathrm{E}}}$}{Upper bound of charging/discharging power of $j$-th ES}
\nomenclature[C]{$N$}{Number of original scenarios}

\nomenclature[D]{$E_j$}{Procured energy capacity of $j$-th ES}
\nomenclature[D]{$P_{t}^{\mathrm{T}}$}{Day-ahead trading power}
\nomenclature[D]{$P_{s,t}^{\mathrm{T+}}/P_{s,t}^{\mathrm{T-}}$}{Intraday up-regulation/down-regulation power}
\nomenclature[D]{$P_{i,s,t}^{\mathrm{R},\mathrm{c}}/P_{i,s,t}^{\mathrm{L},\mathrm{s}}$}{RES curtailment/load shedding power at node $i$}
\nomenclature[D]{$P_{j,s,t}^{\mathrm{E},\mathrm{c}}/P_{j,s,t}^{\mathrm{E},\mathrm{d}}$}{Charge/discharge power of $j$-th ES}
\nomenclature[D]{$D_{j,s,t}^{\mathrm{E}}$}{Charge/discharge state of $j$-th ES}
\nomenclature[D]{$D^{\mathrm{T}}_{s,t}$}{Intraday balancing state of ADN}
\nomenclature[D]{$P_{ij,s,t}/Q_{ij,s,t}$}{Active/reactive power of line $ij$}
\nomenclature[D]{$I_{ij}$}{Current of line $ij$}
\nomenclature[D]{$p_{i,s,t}/q_{i,s,t}$}{Active/reactive injection power at node $i$}
\nomenclature[D]{$\mathcal{V}_{i,s,t}$}{Squared voltage magnitude at node $i$}
\nomenclature[D]{$\omega_k$}{Probability of scenario $\zeta_k$}
\nomenclature[D]{$K$}{Number of representative scenarios}
\nomenclature[D]{$\beta$}{Trade-off factor}
\nomenclature[D]{$z^\ast_{\boldsymbol{\xi}}/z^\ast_{\boldsymbol{\zeta}}$}{Optimal solution of the original problem with $\bm{\xi}\ $/reduced problem with $\bm{\zeta}$}

\nomenclature[E]{$d(\cdot)$}{Problem-driven distance function}
\nomenclature[E]{$F(\cdot)$}{Objective of TSSO problem}

\printnomenclature

\section{Introduction}\label{Introduction}
\IEEEPARstart{T}{he} rapid integration of renewable energy sources (RES) and new loads into the power systems has led to increased variability and uncertainty in operations.
Thus, effective decision-making in power system operations must account for these uncertainties to manage risks~\cite{roald2023power}.
With complete information of uncertainties (e.g., a known probability distribution), 
chance-constrained optimization~\cite{guo2020chance,qi2023chance}, robust optimization~\cite{RO_Zengbo,zhang2021robust}
and distributionally robust optimization~\cite{guo2018data,qiu2023two}
have been verified to be effective approaches. 
In cases where incomplete information of uncertainties~\cite{SO2} (e.g., historical, forecasted scenarios), 
a common practice is to employ scenario-based stochastic optimization (SBSO)\cite{SO1,SO3,SO4}, 
where a finite scenario set is utilized to approximate the probability distribution of uncertainties~\cite{yin2022stochastic}.
However, scenario-based techniques typically struggle with the ``curse of dimensionality'', which becomes more pronounced as the variety and number of uncertainties increase~\cite{liu2023stochastic}.
To reduce this complexity, scenario reduction (SR) can be used to identify a smaller representative scenario set to replace the original scenario set for decision making while maintaining an acceptably robust optimal solution. 
However, three critical questions exist for SR:
\textit{(i)} how to quantify the similarity between scenarios?
\textit{(ii)} how to define the representativeness of the reduced scenarios prospectively?
\textit{(iii)} how to perform SR to yield a representative scenario set that optimally reflects the full original formulation?

Most SR methods implicitly assume that statistically \textit{better} representations of the original scenario set in the distribution space necessarily yield \textit{better} optimal solutions of SBSO.
We refer to these methods as distribution-driven scenario reduction (DDSR) methods. 
The overview of DDSR methods is summarized in Fig.\ref{core diagram}(a).
DDSR methods generally construct the original scenario set using raw scenario data~\cite{qi2023portfolio}, deep features extracted by machine learning~\cite{AE_PCMP,AE_NN}, and relevant problem properties manually selected based on engineering experience (e.g., power ramping~\cite{huang2020incorporating}, network power flow~\cite{clusterpf}, and investment cost~\cite{Cost_Oriented2019}).
Moreover, to account for the impacts of worst-case scenarios, the original scenario set is often split into ``normal'' and ``worst-case'' subsets and SR is performed separately on each subset~\cite{extreme_select}.
However, the definition of ``worst-case'' scenarios varies across different problem formulations 
(e.g., economic dispatch and resilience-oriented dispatch~\cite{extreme}) and are often difficult to explicitly define. Subsequently, distribution-driven distance metrics, 
such as Euclidean distance~\cite{qi2023portfolio}, Wasserstein distance~\cite{Wasserstein}, and dynamic time warping distance~\cite{DTW} 
are frequently used to measure the similarity between scenarios. 
Furthermore, clustering techniques, 
such as hierarchical clustering (HC)~\cite{HCliu2017}, $K$-means~\cite{k-means1}, $K$-medoids~\cite{KMedoids}, and
Gaussian mixture model~\cite{GMM}, 
are employed to cluster the original scenario set into a representative scenario set. 
However, these methods generally rely on a myriad of hyper-parameters (e.g., random initialization and iterative adjustments).
Finally, statistical indices based on distribution-driven distance metrics (e.g., Davis-Bouldin index) are used to validate the clustering performance. 
Unfortunately, higher statistical similarities between reduced and original scenario sets may not guarantee a better optimality approximation, which is particularly the case in optimization of power systems~\cite{teichgraeber2019clustering}. 
A simple example is that a slight increase in load demand beyond the safety limit can 
result in additional operational adjustments and penalty costs for system reliability losses~\cite{ReliabilityPCMP}.
Moreover, DDSR methods tend to generate the same representative scenario set for two different problems that share the same original scenario set.
That is, since DDSR methods generally consider SR and SBSO as two distinct and decoupled processes, they suffer from a critical oversight: an inability to consider the impacts of the reduced scenario set on the optimal solution to the original SBSO.

\begin{figure}[t] 
    \setlength{\abovecaptionskip}{-0.1cm}  
    \setlength{\belowcaptionskip}{-0.1cm}   \centerline{\includegraphics[width=1\columnwidth]{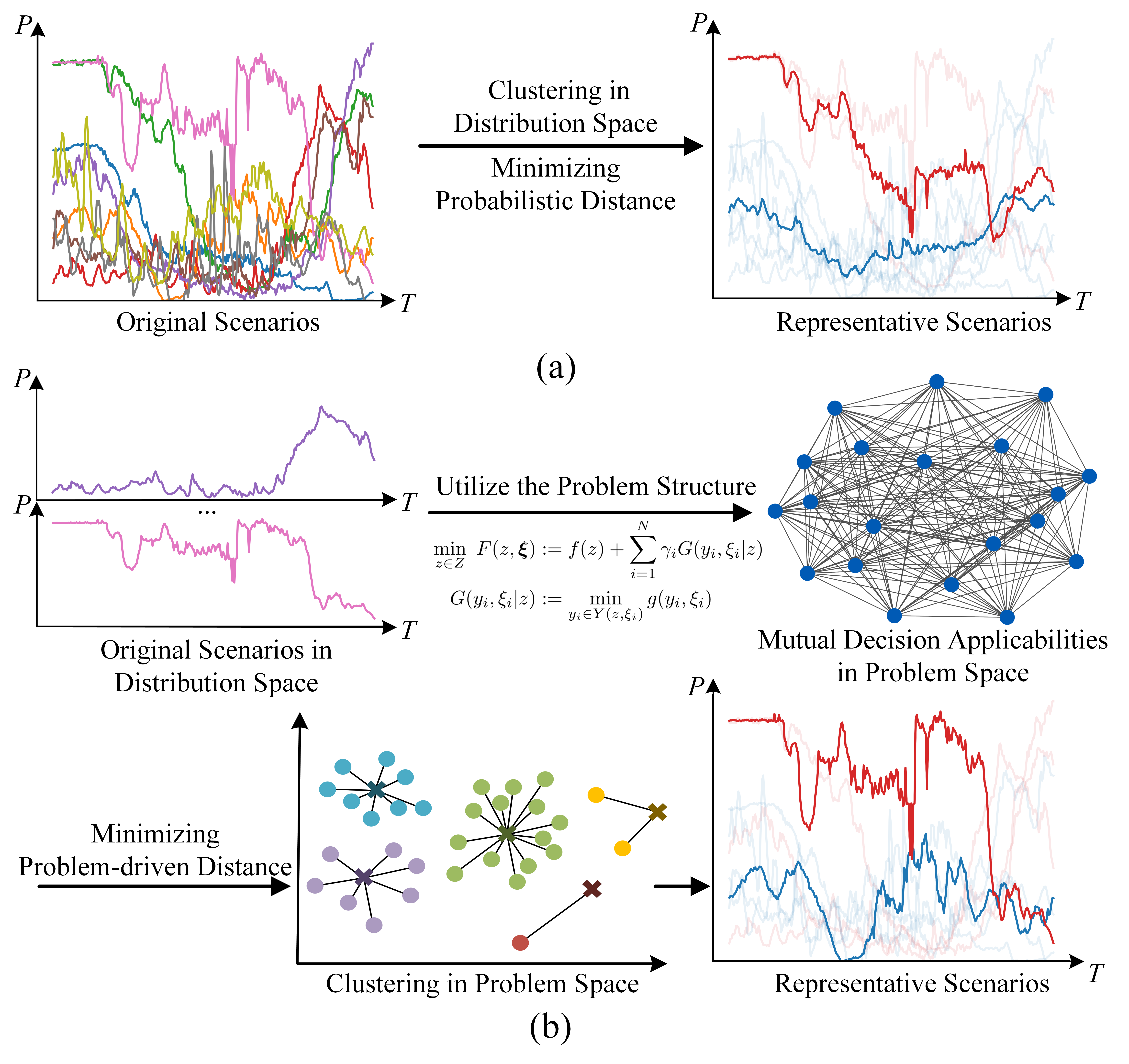}}
    \caption{Diagrams of scenario reduction methodologies: (a) distribution-driven scenario reduction and (b) problem-driven scenario reduction.}
    \label{core diagram}
\end{figure}

To address this gap, SR methods should re-evaluate \textit{representativeness} of scenarios. 
Specifically, the efficacy of SR should be gauged by the performance of the representative scenarios with consideration of the SBSO problem structure, i.e., in \textit{problem space}, as illustrated in Fig.~\ref{core diagram}(b). 
\textit{Problem space} is a projection of the distribution space that takes into account the SBSO problem structure,
wherein the scenarios are positioned by the repulsive forces
formed by their mutual decision impacts on the problem outcomes,
rendering a spatial structure based on objective and decision elements.
This paper, therefore, focuses on identifying scenarios with high \textit{decision applicability}, 
which refers to their relatively large impacts on the optimal objective value and decision-making.
Recently, literature has integrated decision applicability into the SR process and referred the framework problem-dependent SR. 
In~\cite{Bertsimas}, a problem-dependent methodology is proposed for SR that 
relies on a computationally complex Wasserstein distance metric and alternating minimization algorithm, 
which limits scalability and potentially yields suboptimal solutions.
A symmetric opportunity cost is employed in~\cite{hewitt2022decision} as the distance metric to measure the decision applicability between scenarios, 
and heuristic methods are used to perform SR, which limits the ability to characterize the optimality gap. 
Ref.~\cite{keutchayan2023problem}~interestingly develops a problem-driven scenario clustering method with asymmetric distance metric 
where the representative scenarios are selected based on their average decision applicability in the entire cluster,
which limits scalability of the method to SBSOs with 
low dimensionality (e.g., few scenarios, simple problem structure)
and is impractical in power system applications.

In this paper, we propose a novel problem-driven scenario reduction (PDSR) framework for solving SBSO problems to near-optimality and case studies illustrate improvements of up to ten times in terms of scalability and optimality gap over nine state-of-the-art methods, which enables PDSR applications to power systems for the first time.  
Specifically, our contributions are as follows:

\begin{enumerate}
    \item \textit{Problem-Driven Scenario Reduction Framework:} 
        We propose a novel PDSR framework for general SBSO problems by defining the concept of optimality gap (OG) for SR 
        and analytically characterizing the impacts of SR on the SBSO problem outcomes.
    \item \textit{Problem-Driven Distance Metric:} 
        To evaluate the representativeness of scenarios, we introduce a provably effective problem-driven distance (PDD) metric
        that quantifies the mutual decision applicability between scenarios.
        We show that the OG can be bounded above by minimizing the sum of PDD within clusters (SPDD). 
        Furthermore, we use the PDD and OG to introduce new \textit{ex-ante} and \textit{ex-post} problem-driven SR evaluation indices. 
    \item \textit{Clustering Methodology:} 
        Based on the PDD, we convert the original scenario set partitioning 
        and representative scenario selection process into a mixed-integer linear program (MILP).
        The MILP objective balances minimization of SPPD within clusters and the reduction degree reflected by the 
        total number of clusters (i.e., representative scenarios). 
    \item \textit{Simulation-Based Analysis:} 
        Two case studies validate the proposed PDSR framework 
        by applying it to stochastic two-stage economic optimization problems within the context of active distribution networks and unit commitment,
        which co-optimize day-ahead decisions and intraday decision adjustments.
        Simulation results demonstrate PDSR's ability to identify salient scenarios 
        that achieve an SR optimality gap up to ten times smaller than the nine state-of-the-art SR methods.
\end{enumerate}

The remainder of the paper is organized as follows. 
Section~\ref{PDSR} introduces the proposed PDSR framework. 
Formulation of a scenario-based stochastic economic dispatch problem for an active distribution network is presented in Section~\ref{SO_DN}. 
Numerical studies based on real-world data are provided in Section~\ref{Case Study} to illustrate comparative performance. 
Finally, conclusions are summarized in Section~\ref{Conclusion}.

\section{Problem-Driven Scenario Reduction\\Framework}\label{PDSR}
In this section, we detail the novel PDSR framework within the context of a general two-stage stochastic optimization (TSSO) problem, which represents a rich set of power system problems. 
Note that the PDSR framework can also be adapted to single-stage and multi-stage SBSO problems.
         
\subsection{Formulation of Two-Stage Stochastic Optimization}
Two-stage stochastic optimization is an effective formulation in stochastic optimization to address uncertainties 
due to its ``here-and-now'' and ``wait-and-see'' characteristics, 
which respectively represent the decisions to be made before and after the uncertainty is revealed. 
The general formulation of the TSSO built on the original scenario set is
\begin{subequations}\label{SAA SO}
    \begin{align}
    \label{SAA SO1}
        \min\limits_{z\in Z}\  F(z,\boldsymbol{\xi})&:=f(z)+\sum_{i=1}^{N}\gamma_i G(y_i,\xi_i |z )\\
        \label{SAA SO2}
        G(y_i,\xi_i |z) &:= \min \limits_{y_i \in Y(z,\xi_i)} g(y_i,\xi_i)\text{,}
    \end{align}
\end{subequations}
where $F(z,\boldsymbol{\xi})$ is the objective function of the TSSO, 
including the objective of the first stage and the expected objective of the second stage. 
$f(z)$ is the objective function of the first stage with the decision variable $z\in Z$. 
Here, $Z\subseteq \mathbb{R} ^n$ denotes the bounded feasible set. 
The original set of $N$ scenarios is denoted as $\boldsymbol{\xi}=\{\xi_1,\xi_2,..., \xi_N\}$.
The probability of scenario $\xi_i$ is $\gamma_i > 0$, which satisfies $\sum_{i=1}^{N}\gamma_i=1$.
The second-stage problem is $G(y_i,\xi_i|z )$ under the uncertainty $\xi_i$ with $y_i\in Y(z,\xi_i),\  Y\subseteq \mathbb{R} ^m$ 
as the decision variable and $z$ as the parameter,
since $z$ remains constant across all second-stage problems.
$g(y_i,\xi_i)$ is the objective function of the second stage. 
We denote the optimal solution of \eqref{SAA SO} as $z^\ast_{\boldsymbol{\xi}}= \underset{z\in Z}{{\arg\min}}\ F(z,\boldsymbol{\xi})$.
Additionally, we denote the scenario specific subproblem $F(z,\xi_i ):= f(z)+ G(y_i,\xi_i )$ 
with $z_{\xi_i}^\ast= \underset{z\in Z}{{\arg\min}}\ F(z,\xi_i)$, we have $F(z,\boldsymbol{\xi})=\sum_{i=1}^{N}\gamma_i F(z,\xi_i )$.
We reasonably require the TSSO to satisfy the assumption of \textit{relatively complete recourse},
implying that there exists a solution to TSSO for any $z \in Z$ and $\xi \in \boldsymbol{\xi}$.
This assumption is common in stochastic optimization~\cite{relativecompleterecourse}, 
ensuring sufficient resources to handle potential risks, even costly.

To address the challenges of computation complexity when $N$ is large,
SR is employed to significantly reduce the computation complexity 
while maintaining the problem optimality approximation accuracy at an acceptable level. 
The SR process can be denoted as 
$\boldsymbol{C}(\boldsymbol{\xi},K)=\{\{C_1,...,C_K\}:C_i\neq  \emptyset, \forall i ;C_i\cap C_j=\emptyset,\forall i\neq j;\cup_i C_i=I, I = \{1,2,\hdots,N\}\}$. 
The original scenario set $\boldsymbol{\xi}$ is partitioned into $K(K\ll N)$ clusters
and is reduced to a representative scenario set $\boldsymbol{\zeta} = \{\zeta_1,\zeta_2,..., \zeta_K\}$. 
Each scenario cluster $C_k$ is represented by the representative scenario $\zeta_k$ with corresponding weight $\omega_k=\sum_{i \in C_k} \gamma _i$, which satisfies $\sum_{k=1}^{K}\omega_k=1$.
In this paper, we concentrate on selecting $\boldsymbol{\zeta}$ as a subset of $\boldsymbol{\xi}$, 
instead of generating new scenarios.
The TSSO formulated on the representative scenario set $\boldsymbol{\zeta}$ is given by
\begin{subequations}\label{reduced SO}
    \begin{align}
        \min\limits_{z\in Z}\  F(z,\boldsymbol{\zeta})&:=f(z)+\sum_{k=1}^{K}\omega_k G(y_k,\zeta _k |z)\\
        G(y_k,\zeta _k |z) &:= \min \limits_{y_k \in Y(z,\zeta _k)} g(y_k,\zeta _k)\text{.}
    \end{align}
\end{subequations}

The optimal solution of the reduced problem \eqref{reduced SO} is denoted as $z^\ast_{\boldsymbol{\zeta}}= \underset{z\in Z}{{\arg\min}}\ F(z,\boldsymbol{\zeta})$. 
Specifically, we would like to understand how SR affects the optimality of SBSO. 
Thus, we seek to define an SR optimality gap metric next.

\subsection{SR Optimality Gap}
SR aims to minimize the optimality gap from using $K$ representatives ($\boldsymbol{\zeta}$) vs. $N$ scenarios ($\boldsymbol{\xi}$). Towards this purpose, the OG can be defined as
\begin{equation}\label{optimality gap}
OG:=F(z^\ast_{\boldsymbol{\zeta}},\boldsymbol{\xi})-F(z^\ast_{\boldsymbol{\xi}},\boldsymbol{\xi})\text{,}
\end{equation}
where $F (z^\ast_{\boldsymbol{\zeta}},\boldsymbol{\xi})$ means solving~\eqref{SAA SO} with $z=z^\ast_{\boldsymbol{\zeta}}$.
Compared to~\cite{keutchayan2023problem}, we present a distinct and rigorous derivation of an upper bound on $OG$.

Since $F(z,\boldsymbol{\xi})\ge F(z^\ast_{\boldsymbol{\xi}},\boldsymbol{\xi})$ for all $z\in Z$, we have $OG \ge 0$.
A smaller $OG$ indicates a more accurate problem optimality approximation of $\boldsymbol{\zeta}$ to $\boldsymbol{\xi}$.
Since $F(z,\boldsymbol{\zeta})\ge F(z^\ast_{\boldsymbol{\zeta}},\boldsymbol{\zeta})$ for all $z\in Z$, we can derive an upper bound of $OG$ as
\begin{equation}\label{OG 1}
    \begin{split}
        OG \le & F(z^\ast_{\boldsymbol{\zeta}},\boldsymbol{\xi})-F(z^\ast_{\boldsymbol{\xi}},\boldsymbol{\xi}) + (F(z^\ast_{\boldsymbol{\xi}},\boldsymbol{\zeta}) -F(z^\ast_{\boldsymbol{\zeta}},\boldsymbol{\zeta}))\\
         = & (F(z^\ast_{\boldsymbol{\zeta}},\boldsymbol{\xi})-F(z^\ast_{\boldsymbol{\zeta}},\boldsymbol{\zeta}))-(F(z^\ast_{\boldsymbol{\xi}},\boldsymbol{\xi})-F(z^\ast_{\boldsymbol{\xi}},\boldsymbol{\zeta}))\text{.}\\
    \end{split}
\end{equation}

Note that both the first and the last pair of terms share a common expression of $F(z,\boldsymbol{\xi})-F(z,\boldsymbol{\zeta})$, 
which can be further reformulated as
\begin{align}
        F(z,\boldsymbol{\xi})-F(z,\boldsymbol{\zeta}) 
        =&\sum_{i=1}^{N}\gamma_i F(z,\xi_i )-\sum_{k=1}^{K}\omega_k F(z,\zeta _k )\notag\\
        =&\sum_{k=1}^{K}\sum_{i \in C_k}\gamma_iF(z,\xi_i )-\sum_{k=1}^{K}\sum_{i \in C_k}\gamma_i F(z,\zeta _k )\notag\\ 
        =&\sum_{k=1}^{K}\sum_{i\in C_k}\gamma_i\bigl(F(z,\xi_i )-F(z,\zeta _k )\bigr)\text{.}\label{OG 2}
\end{align}

Combining~\eqref{OG 1} and~\eqref{OG 2}, the upper bound of $OG$ can be further expanded as
\begin{equation}\label{OG 3}
    \begin{split}
        OG \leq \sum_{k=1}^{K}\sum_{i\in C_k}&\gamma_i\bigl(F(z^\ast_{\boldsymbol{\zeta}},\xi_i )-F(z^\ast_{\boldsymbol{\zeta}},\zeta _k )\bigr)\\
        \quad -\sum_{k=1}^{K}\sum_{i\in C_k}&\gamma_i\bigl(F(z^\ast_{\boldsymbol{\xi}},\xi_i )-F(z^\ast_{\boldsymbol{\xi}},\zeta _k )\bigr)\\
         \leq \sum_{k=1}^{K}\sum_{i\in C_k}&\gamma_i\bigl(|F(z^\ast_{\boldsymbol{\zeta}},\xi_i )-F(z^\ast_{\boldsymbol{\zeta}},\zeta _k )|\\
        &\ +|F(z^\ast_{\boldsymbol{\xi}},\xi_i )-F(z^\ast_{\boldsymbol{\xi}},\zeta _k )|\bigr)\text{.}\\
    \end{split}
\end{equation}

Given that $z^\ast_{\boldsymbol{\xi}}$ and $z^\ast_{\boldsymbol{\zeta}}$ are not known, we use a result from~\cite{SRSO} to derive an upper bound on~\eqref{OG 3} based on~\eqref{lemma},
which states that for a locally Lipschitz continuous function $F(z,\xi)$, 
there exists a continuous symmetric function $d(\cdot)$ and a non-decreasing function $h(\cdot)$, 
such that for each $z \in Z$ and $\xi_i,\zeta_k \in \boldsymbol{\xi}$, we have 
\begin{equation}\label{lemma}
    |F(z,\xi_i)-F(z,\zeta_k)|\leq h(\|z\Vert) d(\xi_i,\zeta_k)\text{,}
\end{equation}
where $d(\xi_i,\zeta_k)$ is required to satisfy the following properties:

\noindent
C1) \textit{Consistency}: $d(\zeta_k,\xi_i)=0\Leftrightarrow \zeta_k=\xi_i$;

\noindent
C2) \textit{Symmetricity}: $d(\zeta_k,\xi_i)=d(\xi_i, \zeta_k)$, $\forall \zeta_k,\xi_i \in \boldsymbol{\xi}$;

\noindent
C3) \textit{Convergence}: $\sup\{d(\zeta_k,\xi_i):\zeta_k,\xi_i \in \boldsymbol{\xi},\| \zeta_k-\xi_i\Vert \leq \delta \}$ tends to 0 as $\delta \rightarrow 0$;

\noindent
C4) \textit{Triangle inequality}: $\exists$ measurable, bounded function $\lambda (\cdot)$, where $d(\zeta_k,\xi_i) < \lambda(\zeta_k)+\lambda(\xi_i)$.

Finally, combining~\eqref{OG 3} and~\eqref{lemma} begets
\begin{equation}\label{OG 4}
    \begin{split}
        OG\leq&(h(\|z^\ast_{\boldsymbol{\xi}}\Vert)+h(\|z^\ast_{\boldsymbol{\zeta}}\Vert))\sum_{k=1}^{K}\sum_{i\in C_k}\gamma_id(\xi_i,\zeta_k)\text{.}
    \end{split}
\end{equation}

Now, the primary challenge to apply the above result lies in defining an appropriate distance metric in the problem space, 
$d(\xi_i,\zeta_k)$, that satisfies properties C1)-C4). 
Note that since $Z$ is bounded, $\|z\Vert$ is well-defined, i.e, $\exists M\gg 1,~\|z\Vert \le M~\forall, z\in Z$.
Next, we project the distribution space into the \textit{problem space} 
and then define an appropriate metric $d(\xi_i,\zeta_k)$ within the problem space.

\subsection{Problem Space Projection}
In scenario-based problem formulations,
each $z^\ast_{\zeta_k}$ is usually implemented within its respective scenario clusters,
where $z_{\zeta_k}^\ast= \underset{z\in Z}{{\arg\min}}\ F(z,\zeta_k)$.
Motivated by this, 
PDSR projects
the original scenario set in the distribution space onto the problem space,
which is constructed by the decision applicability between scenarios.
The projection process can be denoted as 
$\boldsymbol{\xi}\rightarrow \boldsymbol{F}$,
where $\boldsymbol{F}:=\{F_{ij}=F(z_{\xi_i}^\ast,\xi_j)\mid i,j\in I\}$.
Each $F_{ij}$ represents a scenario-specific problem 
and is bounded under the condition of \textit{relatively complete recourse}. 
Note that, if $z_{\xi_i}^\ast= \underset{z\in Z}{{\arg\min}}\ F(z,\xi_i)$ has multiple optimal solutions,
we select the one that minimizes $\sum_{\xi \in \bm{\xi}}F(z,\xi)$, 
indicating better decision applicability to the original scenario set.
In this projection, we can directly quantify the impacts of uncertainty and systematically incorporate the inherent characteristics of the SBSO problem into the SR process.
Of course, this approach necessitates solving $N^2$ optimization problems to determine $\boldsymbol{F}$, which may be computationally intensive for large $N$.
However, since each problem is independent and can be solved in parallel, the absolute time required to find $F$ can be reduced significantly. 
The algorithmic efficiency is discussed in~\ref{comparative results}.

\subsection{Problem-Driven Distance Metric}
Opportunity cost can be utilized to describe the decision applicability, 
which pertains to the trade-off wherein the selection of a particular action 
necessitates the relinquishment of potential benefits associated with the best alternative.
In the context of SR, when scenario $\zeta_k$ is chosen to represent scenario $\xi_i$, 
the opportunity cost is defined as
\begin{equation}\label{opportunity cost}
    c(\zeta_k,\xi_i) := F(z_{\zeta_k}^\ast,\xi _i )-F(z_{\xi_i}^\ast,\xi_i)\text{.}
\end{equation}

However, $c(\zeta_k,\xi_i)$ does not satisfy all properties C1)-C4) and cannot serve as a distance metric $d(\cdot)$. 
Instead, we consider the following problem-driven distance metric between scenarios $\xi_i$ and $\zeta_k$:
\begin{align}
            d(\xi_i,\zeta_k) :=& c(\zeta_k,\xi_i)+c(\xi_i,\zeta_k)\label{distance} \\
            =& F(z_{\zeta_k}^\ast,\xi _i )-F(z_{\xi_i}^\ast,\xi_i)+F(z_{\xi_i}^\ast,\zeta _k )-F(z_{\zeta_k}^\ast,\zeta_k)\text{.} \notag
\end{align}

\begin{prop}\label{prop1}
    The distance metric $ d(\xi_i,\zeta_k)$ in~\eqref{distance} satisfies all four properties C1)-C4) for~\eqref{lemma}.
\end{prop}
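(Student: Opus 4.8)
The plan is to verify the four properties C1)--C4) in turn for the symmetrized opportunity cost $d(\xi_i,\zeta_k) = c(\zeta_k,\xi_i)+c(\xi_i,\zeta_k)$, where each $c(\zeta_k,\xi_i) = F(z_{\zeta_k}^\ast,\xi_i) - F(z_{\xi_i}^\ast,\xi_i)$. The first observation I would record, and use throughout, is that each opportunity cost is nonnegative: since $z_{\xi_i}^\ast$ minimizes $F(\cdot,\xi_i)$ over $Z$, we have $F(z_{\zeta_k}^\ast,\xi_i) \ge F(z_{\xi_i}^\ast,\xi_i)$, hence $c(\zeta_k,\xi_i) \ge 0$, and likewise $c(\xi_i,\zeta_k)\ge 0$. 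Therefore $d \ge 0$ always, and $d$ vanishes if and only if \emph{both} opportunity costs vanish.

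For C1) (consistency), the $\Leftarrow$ direction is immediate: if $\zeta_k = \xi_i$ then $z_{\zeta_k}^\ast = z_{\xi_i}^\ast$ (using the tie-breaking rule that makes $z_{\xi_i}^\ast$ a well-defined function of the scenario), so both opportunity costs are zero. For $\Rightarrow$, suppose $d(\xi_i,\zeta_k)=0$; by nonnegativity of each term, $F(z_{\zeta_k}^\ast,\xi_i) = F(z_{\xi_i}^\ast,\xi_i)$, i.e., $z_{\zeta_k}^\ast$ is also an optimal solution of the $\xi_i$-subproblem, and symmetrically $z_{\xi_i}^\ast$ is optimal for the $\zeta_k$-subproblem. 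The argument I would give is that the tie-breaking rule forces these to coincide: among all minimizers of $F(\cdot,\xi_i)$, we picked the one minimizing $\sum_{\xi}F(z,\xi)$; since $z_{\zeta_k}^\ast$ is a minimizer of $F(\cdot,\xi_i)$ and is itself selected by the same secondary criterion for $\zeta_k$, one deduces $z_{\xi_i}^\ast = z_{\zeta_k}^\ast$, and then that the scenario-specific objective functions agree, forcing $\xi_i = \zeta_k$. This is the step I expect to be the main obstacle, because it implicitly requires that distinct scenarios induce distinct subproblems (an identifiability assumption) and that the tie-break is consistent; I would state this as the precise content being used, rather than hand-waving. C2) (symmetry) is trivial: $d(\xi_i,\zeta_k) = c(\zeta_k,\xi_i)+c(\xi_i,\zeta_k) = c(\xi_i,\zeta_k)+c(\zeta_k,\xi_i) = d(\zeta_k,\xi_i)$ by commutativity of addition.

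For C3) (convergence), I would expand $d(\xi_i,\zeta_k) = \bigl(F(z_{\zeta_k}^\ast,\xi_i) - F(z_{\zeta_k}^\ast,\zeta_k)\bigr) + \bigl(F(z_{\xi_i}^\ast,\zeta_k) - F(z_{\xi_i}^\ast,\xi_i)\bigr)$, grouping terms to share a fixed decision $z$ in each pair. Invoking the local Lipschitz continuity of $F(z,\cdot)$ in the scenario argument and the boundedness of $Z$ (so $\|z\| \le M$), each bracket is bounded by $L_M\|\xi_i - \zeta_k\|$ for a uniform constant $L_M$; hence $d(\xi_i,\zeta_k) \le 2L_M\|\xi_i-\zeta_k\|$, and the supremum over pairs with $\|\zeta_k-\xi_i\|\le\delta$ is at most $2L_M\delta \to 0$. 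For C4) (the relaxed triangle inequality), I would note $d$ is bounded on the finite scenario set $\boldsymbol{\xi}\times\boldsymbol{\xi}$ — indeed each $F_{ij}$ is finite under relatively complete recourse — so setting $\lambda(\xi) := \tfrac12\bigl(1 + \max_{j} d(\xi,\xi_j)\bigr)$ (or any uniform bound $B \ge \max d$ with $\lambda \equiv B/2 + \epsilon$) gives a measurable bounded function with $d(\zeta_k,\xi_i) \le \max d < \lambda(\zeta_k)+\lambda(\xi_i)$, with the strict inequality secured by the added positive constant. I would close by remarking that C3) and C4) only need the established nonnegativity, finiteness, and Lipschitz bound, so the real work is entirely in the $\Rightarrow$ half of C1).
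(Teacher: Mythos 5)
Your proof is correct and follows essentially the same route as the paper: nonnegativity of each opportunity cost, an explicit identifiability assumption ($z_{\zeta_k}^\ast=z_{\xi_i}^\ast \Rightarrow \zeta_k=\xi_i$) to close the forward direction of C1, Lipschitz grouping of terms for C3, and boundedness of $F$ under relatively complete recourse for C4. The only additions in the paper are a remedy for when that identifiability assumption fails---the regularized metric $\tilde{d}(\xi_i,\zeta_k)=d(\xi_i,\zeta_k)+\mu\|\zeta_k-\xi_i\|_2$ with $\mu>0$, which makes C1 hold unconditionally---and the specific choice $\lambda(\xi_i)=2\sup_{z\in Z}|F(z,\xi_i)|$ in C4; both are cosmetic relative to your argument.
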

\begin{proof}
    Please see proof in Appendix~\ref{appendix}.
\end{proof}

The PDD in~\eqref{distance} effectively quantifies the mutual decision applicability between two scenarios. 
That is, a small $d(\xi_i,\zeta_k)$ implies that scenario $\zeta_k$ serves as an accurate representation of scenario $\xi_i$. 

Combining~\eqref{OG 4} and~\eqref{distance} begets
\begin{equation}\label{OG 5}
    \begin{split}
        OG \leq 2h(M)\sum_{k=1}^{K}\sum_{i\in C_k}\gamma_i& \bigl( F(z_{\zeta_k}^\ast,\xi_i) - F(z_{\xi_i}^\ast,\xi_i) \\
         + &F(z_{\xi_i}^\ast,\zeta_k) - F(z_{\zeta_k}^\ast,\zeta_k) \bigr)\text{.}
    \end{split}
\end{equation}

Note that while the above analysis can be extended to non-convex formulations, the bounds on $OG$ are conditioned on finding the global optimum.
Convex formulations, as well as certain simple non-convex formulations in~\eqref{reduced SO} are the focus of this paper, as they 
can be solved to global optimality by existing commercial solvers.
Next, we select salient scenarios that minimize the upper bound in~\eqref{OG 5}.

\subsection{MILP Reformulation of Clustering}
Since $2h(M)$ in~\eqref{OG 5} is a constant, minimizing the sum of PDD within clusters achieves the lowest upper bound of OG. 
To achieve this, 
the process of scenario partitioning and representative scenario selection can be rendered as the following MILP formulation. 
\begin{subequations}\label{MILP}
    \begin{align}
            \label{MILP obj}
            \min_{v,u,l,K} &\ \sum_{j=1}^{N}\ l_{j} + \beta K/{N}  \\
            \label{MILP dis}
            \text{s.t. } & \sum_{i=1}^{N}\gamma_iv_{ij}(F_{ji}-F_{ii}+F_{ij}-F_{jj}) \le l_{j}, \ \forall j \in I\\
            &v_{ij}\leq u_{j}, \ v_{jj}=u_{j},\  \forall i,j \in I\\
            \label{MILP v}
            &\sum_{j=1}^{N}v_{ij}=1, \ \forall i \in I\\
            \label{MILP u}
            &\sum_{j=1}^{N}u_{j}=K, \ \forall j \in I\\
            &v_{ij}\in \{0,1\},\ u_{j} \in \{0,1\},\  \forall\  i,j \in I\text{.}
    \end{align}
\end{subequations}

In~\eqref{MILP obj}, $\sum\nolimits_{j=1}^{N}\ l_{j}$ describes the SPDD, 
and $K/N$ describes the reduction degree. 
$\beta$ is the trade-off factor to achieve a balance between the SPDD and reduction degree, 
while simultaneously deciding the optimal clustering number $K$. 
The binary variable $u_j$ indicates whether scenario $\xi_j$ is selected as a representative scenario of a cluster, 
while the binary variable $v_{ij}$ determines whether scenario $\xi_i$ is included in the cluster represented by scenario $\xi_j$. 
Constraint $v_{ij}\leq u_{j}$ ensures that $\xi_i$ can only be assigned to a cluster that has a designated representative, 
while $v_{jj} = u_j$ enforces that $\xi_i$ must be assigned to its respective cluster if it's a representative scenario.
Constraint~\eqref{MILP v} ensures that each scenario $\xi_j$ can only be assigned to one cluster,
while~\eqref{MILP u} guarantees that exactly $K$ clusters are formed. 
The weight of cluster $C_k$ is calculated as $\omega_k = \sum_{i=1}^{N}v_{ik} \gamma_i$.
    
\subsection{Problem-Driven Evaluation Indices}\label{eval metric}
In this section, two types of problem-driven evaluation indices are introduced: \textit{ex-ante} and \textit{ex-post} indices.
\textit{Ex-ante} indices emphasizes the SR's ability in partitioning and representing the original scenario set in the problem space
before solving the reduced problem. 
\textit{Ex-post} indices focus on the impacts of SR on the outcomes of the TSSO after solving the reduced problem.
For the following indices, the first two indices are \textit{ex-ante} indices,
while the last two indices are \textit{ex-post} indices.

\subsubsection{Sum of PDD within clusters (SPDD)}
\begin{equation}\label{Total PDD}
     \text{SPDD} :=\sum_{j=1}^{N}\ l_{j} =  \sum_{k=1}^{K}\sum_{i\in C_k}\gamma_i (F_{ki}-F_{ii}+F_{ik}-F_{kk})\text{.}
\end{equation}

SPDD measures the dispersion between scenarios and their respective clusters.
A smaller SPDD value indicates a tighter clustering result in the problem space.

\subsubsection{Problem-driven Davies-Bouldin Index}\label{ss-PDDBI}
Based on the Davies-Bouldin Index, 
we introduce the Problem-driven Davies-Bouldin Index (PDDBI) 
utilizing the PDD:
\begin{subequations}\label{PDDBI}
    \begin{align}
        \text{PDDBI} &:= \frac{1}{K} \sum_{m=1}^{K} \max_{1 \leq n \neq m \leq K} \bigl( \frac{D_{m} + D_{n}}{d(\zeta_{m},\zeta_{n})} \bigr) \\
        D_{m} &:= \sum_{i\in C_{m}}\frac{\gamma_i}{\omega_m}d(\zeta_{m},\xi_i)\text{.}
    \end{align}
\end{subequations}

A smaller PDDBI value indicates a better quality of the balance between the within-cluster compactness and the between-cluster separation.

\subsubsection{Optimality gap}\label{OG}
After solving~\eqref{SAA SO} and~\eqref{reduced SO},
we apply $z^\ast_{\boldsymbol{\xi}}$ and $z^\ast_{\boldsymbol{\zeta}}$ to $\boldsymbol{\xi}$, 
and calculate the percentage value of OG as
\begin{equation}\label{OG in eval index}
    OG_{\boldsymbol{\zeta}} (\%) :=\frac{F(z^\ast_{\boldsymbol{\zeta}},\boldsymbol{\xi})-F(z^\ast_{\boldsymbol{\xi}},\boldsymbol{\xi})}{F(z^\ast_{\boldsymbol{\xi}},\boldsymbol{\xi})}\text{.}
\end{equation}

This metric indicates the percentage deviation of the approximated optimality from the optimality of the original problem, and is desired to be close to zero
\footnote{
The feasibility for unexpected scenario realizations or outliers is ensured by the assumption of \textit{relatively complete recourse} and the availability of corrective measures in the second stage.}.

\subsubsection{Representative scenario effectiveness}\label{effective scenario}
For the SBSO based on the representative scenarios,
identifying the relative importance of individual representative scenario 
is crucial for comprehending the problem structure and making reasonable decisions.
We introduce the concept of ``Scenario Effectiveness'',
measuring the significance of a given representative scenario in the problem space.
The scenario effectiveness of scenario $\zeta_k$, denoted as $SE_{\zeta_k}(\%)$, is characterized by the changes in the percentage $OG$
upon its removal from $\boldsymbol{\zeta}$: 
\begin{equation}
    SE_{\zeta_k}(\%) :=OG_{\boldsymbol{\zeta}_{-k}}(\%)-OG_{\boldsymbol{\zeta}}(\%)\text{,}
\end{equation}
where $\boldsymbol{\zeta}_{-k}=\boldsymbol{\zeta}\setminus\{\zeta_k\}$.
A higher value of $SE_{\zeta_k}(\%)$ signifies that the removal of $\zeta_{k}$ 
induces more substantial changes in the $OG$. 
This indicates that $\zeta_{k}$ holds greater significance in influencing the reduced problem outcomes.

\subsection{Algorithm of PDSR}

The proposed PDSR framework is illustrated in \textbf{Algorithm~1}. 
This algorithm consists of two steps:

\textit{Step 1} projects the distribution space onto the problem space by calculating the mutual decision applicability between scenarios,
and constructs the problem space matrix $\boldsymbol{F}$. 
In the projection process, the correlations between scenarios are inherently accounted for by leveraging the problem structure, making complex correlation analysis unnecessary.
Notably, with stronger correlations between the original scenarios, the number of reduced scenarios is reduced.

\textit{Step 2} partitions the original scenario set into clusters and selects representative scenarios by 
first tuning $\beta$ from \textit{ex-ante} indices, and then solving the MILP in~\eqref{MILP}.
Note that $\beta$ depends on the specific problem formulation and the original scenario set. Thus, simple tuning is suggested to obtain an acceptable $K$.

\begin{algorithm}[htbp]\label{algorithm1}
\caption{Problem-Driven Scenario Reduction}
\SetAlgoLined
\SetEndCharOfAlgoLine{}
\KwIn{Scenario set $\boldsymbol{\xi}$ of $N$ scenarios and weights $\boldsymbol{\gamma}$.}
\KwOut{Scenario set $\boldsymbol{\zeta}$ of $K$ scenarios and weights $\boldsymbol{\omega}$.}
\SetKwBlock{StepOne}{Step 1 - Projection in Problem Space}{}
\SetKwBlock{StepTwo}{Step 2 - Clustering}{}
\SetKw{Parallel}{parallel}
\StepOne{
    Step 1.1: Initialize problem space matrix $\boldsymbol{F} = \mathbf{0}$.\;
    Step 1.2: Project the distribution space onto \newline problem space by:\;
    
    \For{$i=1$ \KwTo $N$ }{
        Solve the scenario-specific TSSO problem \newline $F(z,\xi_i )$ and obtain the optimal decision $z_{\xi_i}^\ast$. \;
           Set $F_{ii}=F(z_{\xi_i}^\ast,\xi_i)$.\;
        \For{$j=1$ \KwTo $N$, $i\neq j$ \Parallel}{
            Solve the single-stage and deterministic \newline
            problem $G(y_j,\xi_j |z_{\xi_i}^\ast)$ in~\eqref{SAA SO2}.\;
            Set $F_{ij}=F(z_{\xi_i}^\ast,\xi_j)$.
        }
    }
}
\StepTwo{

    Step 2.1: Select $\beta$ in~(12a) from \textit{ex-ante} indices.\;
    Step 2.2: Solve MILP in~\eqref{MILP} to obtain the \newline representative scenario set $\boldsymbol{\zeta} $ with weights $\boldsymbol{\omega}$.\;
}
\end{algorithm}

To validate the effectiveness of the proposed PDSR framework, we consider applying PDSR to the following two-stage stochastic economic dispatch problem.

\section{Two-Stage Stochastic Economic Dispatch for Active Distribution Networks}\label{SO_DN}
In this section, we consider an optimal stochastic economic dispatch of an 
active distribution network (ADN) that trades with the transmission system. 
The ADN's assets include wind turbines (WT), photovoltaic systems (PV) and energy storage (ES) facilities. 
We focus on uncertainties from WT, PV, loads and two electricity markets: day-ahead and intraday price~\cite{MadsEnergies}, which engenders the two stages for dispatch. 
In the day-ahead stage, uncertainties are described using the representative scenario set and the ADN needs to sign contracts for power trading with the transmission system operator and procure ES capacity from the ES owner 
to address intraday uncertainties.
In the intraday stage,
due to the uncertainties, there will exist deviations between the scheduled day-ahead power trading and the actual intraday power demand, 
and even violations of safety constraints especially in worst-case scenarios. 
Therefore, the ADN decision-maker is risk-averse and prefers to make strategy adjustments to limit constraint violations, which are mainly considered as voltage magnitude constraint violations.

\subsection{Objective Function}
The objective is to minimize the total operation cost including
the day-ahead trading cost 
and the expected intraday balancing cost and the penalty cost. 
The day-ahead trading cost in~\eqref{obj da} includes the cost of trading power with transmission system and the procurement of ES capacity.
The intraday cost includes the balancing cost in the intraday balancing market in~\eqref{obj in im}, 
and the penalty cost of load shedding and RES curtailment in~\eqref{obj in pun}.
\begin{subequations}\label{obj}
    \begin{align}
        \label{obj total}
        & \min \ C^{\mathrm{DA}}+C^{\mathrm{IN,im}}+C^{\mathrm{IN,p}}\\
        \label{obj da}
        & C^{\mathrm{DA}}=\sum_{s=1}^{S} {\omega_s}\sum_{t=1}^{T}{\pi_{s,t}^{\mathrm{T}}P_{t}^{\mathrm{T}}\Delta t}
                                                +\sum_{j \in \varOmega_\mathrm{E}}\pi_j^{\mathrm{E}}E_j\\
        \label{obj in im}
        & C^{\mathrm{IN,im}} = \sum_{s=1}^{S} {\omega_s} \sum_{t=1}^{T} \Delta t 
        (\pi _{s,t}^{\mathrm{T}+}P_{s,t}^{\mathrm{T}+} + \pi _{s,t}^{\mathrm{T}-}P_{s,t}^{\mathrm{T}-})\\
        \label{obj in pun}
        & C^{\mathrm{IN,p}} = \sum_{s=1}^{S} \omega_s \sum_{t=1}^{T}\Delta t 
        (\mkern-4mu \sum_{j \in \varOmega_\mathrm{R}} \pi^{\mathrm{R},\mathrm{c}}P_{j,s,t}^{\mathrm{R},\mathrm{c}}
        +\mkern-6mu \sum_{j \in \varOmega_\mathrm{L}} \pi^{\mathrm{L},\mathrm{s}}P_{j,s,t}^{\mathrm{L},\mathrm{s}})\text{.}
    \end{align}
\end{subequations}

$\pi_{s,t}^{\mathrm{T}}$ and $P_{t}^{\mathrm{T}}$ are the trading electricity price and trading power between the ADN and transmission system in the day-ahead market, respectively.
$\pi_j^{\mathrm{E}}$ and $E_j$ are the procurement price and procured ES capacity at node $j$ in the day-ahead market.
$T$ and $\Delta t$ are time periods and time interval for scheduling.
$S$ is the number of scenarios and $\omega_s$ is the weight of scenario $s$.
$\pi _{s,t}^{\mathrm{T}+}/\pi _{s,t}^{\mathrm{T}-}$ are the imbalancing price of up-regulation and down-regulation 
in the intraday balancing market under scenario $s$ and time $t$.
$P_{s,t}^{\mathrm{T}+}/P_{s,t}^{\mathrm{T}-}$ are the imbalanced purchasing and selling power in the intraday balancing market.
In the intraday balancing market, the ADN can only purchase balancing energy at a higher price than in the day-ahead market, while selling electricity at a lower price.
$P_{j,s,t}^{\mathrm{R},\mathrm{c}}/P_{j,s,t}^{\mathrm{L},\mathrm{s}}$ are the power of RES curtailment and load shedding at node $j$. 
$\varOmega_\mathrm{R}/\varOmega_\mathrm{L}/\varOmega_\mathrm{E}$ refer to the set of nodes of RES, load, and ES.
$\pi^{\mathrm{R},\mathrm{c}}/\pi^{\mathrm{L},\mathrm{s}}$ are the penalty costs of RES curtailment and load shedding.

\subsection{Operational Constraints}
\subsubsection{Power flow constraints}
The linear version of the \textit{Dist-Flow} model, i.e., 
\textit{LinDistFlow}~\cite{linear_distflow} is used in this paper to approximate nodal voltage magnitudes 
and active/reactive line flows in the ADN with the assumption that line losses can be neglected. 
We denote $\varOmega_\mathrm{N}$ as the set of nodes.
$\forall j \in \varOmega_\mathrm{N}$, we have 
\begin{subequations}\label{pf}
    \begin{align}
        \label{pf U}
        & \mathcal{V}_{j,s,t} = \mathcal{V}_{i,s,t}  - 2(r_{ij}P_{ij,s,t} + x_{ij}Q_{ij,s,t}) \\
        \label{pf P}
        & p_{j,s,t} = P_{ij,s,t}-\sum_{l:j\rightarrow l}P_{jl,s,t} \\
        \label{pf Q}
        & q_{j,s,t} = Q_{ij,s,t}-\sum_{l:j\rightarrow l}Q_{jl,s,t} \\
        \label{U limit}
        & \underline{\mathcal{V}_{j}} \leq \mathcal{V}_{j,s,t} \leq \overline{\mathcal{V}_{j}}\text{,}
    \end{align}
\end{subequations}
where $r_{ij}/x_{ij}$ are the line resistance/reactance of line $ij$, respectively.
$\mathcal{V}_{j,s,t}$ denotes the squared voltage magnitude at node $j$ for scenario $s$ at time $t$.
$P_{ij,s,t}/Q_{ij,s,t}$ are the line active/reactive power of line $ij$, respectively.
$p_{j,s,t}/q_{j,s,t}$ are the active/reactive injection power at node $j$.
\eqref{pf U} describes the voltage drop over line $ij$. 
\eqref{pf P} and~\eqref{pf Q} represent the active and reactive power balance at node $j$.
\eqref{U limit} describes voltage magnitude limits at node $j$, 
with $\overline{\mathcal{V}_{j}}/\underline{\mathcal{V}_{j}}$ as the upper/lower bound of squared voltage magnitude.

\subsubsection{RES curtailment and load shedding constraints}
$\forall j \in \varOmega_\mathrm{R}$, we have 
\begin{subequations}\label{RES curtailment and load shedding Constraints}
    \begin{align}
        \label{RES curt}
        & 0 \leq P_{j,s,t}^{\mathrm{R},\mathrm{c}}\leq P_{j,s,t}^{\mathrm{R}} \\
        \label{load shed}
        & 0 \leq P_{j,s,t}^{\mathrm{L},\mathrm{s}}\leq P_{j,s,t}^{\mathrm{L}}\text{,}
    \end{align}
\end{subequations}
where $P_{j,s,t}^{\mathrm{R}}/P_{j,s,t}^{\mathrm{L}}$ are the RES injection and active power consumption at node $j$ for scenario $s$ at time $t$, respectively.

\subsubsection{ES operation constraints}
$\forall j \in \varOmega_\mathrm{E}$, we have 
\begin{subequations}\label{ES constraints}
    \begin{align}
        \label{SoC_0}
        & SoC_{j,s,t+1} = SoC_{j,s,t} + \Delta t(P_{j,s,t}^{\mathrm{E},\mathrm{c}}\eta_j^{\mathrm{c}}-P_{j,s,t}^{\mathrm{E},\mathrm{d}}/\eta_j^{\mathrm{d}})/E_j \\
        \label{SoC0_N}
        & \sum_{t=1}^{T}(P_{j,s,t}^{\mathrm{E},\mathrm{c}}\eta_j^{\mathrm{c}}-P_{j,s,t}^{\mathrm{E},\mathrm{d}}/\eta_j^{\mathrm{d}})\Delta t = 0 \\
        \label{ES_ch}
        & 0 \leq P_{j,s,t}^{\mathrm{E},\mathrm{c}} \leq (1-D_{j,s,t}^{\mathrm{E}})\overline{P_j^{\mathrm{E}}} \\
        \label{ES_dis}
        & 0 \leq P_{j,s,t}^{\mathrm{E},\mathrm{d}} \leq D_{j,s,t}^{\mathrm{E}}\overline{P_j^{\mathrm{E}}} \\
        \label{SoC_range}     
        & \underline{SoC} \leq SoC_{j,s,t} \leq \overline{SoC} \\
       & D_{j,s,t}^{\mathrm{E}} \in \{0,1\}\text{,}
    \end{align}
\end{subequations}
where $P_{j,s,t}^{\mathrm{E},\mathrm{c}}/P_{j,s,t}^{\mathrm{E},\mathrm{d}}$ are the charge/discharge power of ES at node $j$.
$\eta_j^{\mathrm{c}}/\eta_j^{\mathrm{d}}$ are the charge/discharge efficiency, respectively. 
$\overline{P_j^{\mathrm{E}}} $ is the maximum charging/discharging power.
$D_{j,s,t}^{\mathrm{E}}$ is a binary variable indicating the charging/discharging state. 
$\overline{SoC}/\underline{SoC}$ denote the maximum/minimum SoC, respectively.
Constraints~\eqref{SoC_0}, \eqref{SoC0_N} and~\eqref{SoC_range} are related to the state of charge (SoC). 
Constraint~\eqref{SoC0_N} guarantees that the capacity at the last time period is equal to the initial capacity.
Constraints~\eqref{ES_ch} and~\eqref{ES_dis} impose restrictions on the maximum charging/discharging power and charging state of ES.

\subsubsection{Trading constraints with transmission system}
\begin{subequations}\label{EX constraints}
    \begin{align}
        \label{EX up}
        & 0 \leq P_{s,t}^{\mathrm{T}+} \leq (1-D_{s,t}^{\mathrm{T}})\overline{P^{\mathrm{T}}} \\
        \label{EX down}
        & 0 \leq P_{s,t}^{\mathrm{T}-} \leq D_{s,t}^{\mathrm{T}}\overline{P^{\mathrm{T}}} \\
        \label{EX range}
        & -\overline{P^{\mathrm{T}}} \leq P_{t}^{\mathrm{T}}  \leq \overline{P^{\mathrm{T}}} \\
        & -\overline{P^{\mathrm{T}}} \leq P_{t}^{\mathrm{T}} + P_{s,t}^{\mathrm{T}+} - P_{s,t}^{\mathrm{T}-} \leq \overline{P^{\mathrm{T}}} \\
        & D_{s,t}^{\mathrm{T}} \in \{0,1\}\text{,}
    \end{align}
\end{subequations}
where $\overline{P^{\mathrm{T}}}$ is the maximum trading power between ADN and transmission system.
\eqref{EX up} and~\eqref{EX down} indicate that the ADN can only be in one balancing state at one time, 
with the binary variable $D_{s,t}^{\mathrm{T}}$ as the balancing state of ADN.

\subsubsection{Energy balancing constraints}
$\forall j \in \varOmega_\mathrm{N}$, we have 
\begin{subequations}\label{power balance}
    \begin{align}
        \label{pin}
        &p_{j,s,t} = P_{j,s,t}^{\mathrm{E},\mathrm{c}}-P_{j,s,t}^{\mathrm{E},\mathrm{d}} 
         +P_{j,s,t}^{\mathrm{L}}-P_{j,s,t}^{\mathrm{L},\mathrm{s}} -(P_{j,s,t}^{\mathrm{R}}-P_{j,s,t}^{\mathrm{R},\mathrm{c}}) \\
        \label{qin}
        & q_{j,s,t}= Q_{j,s,t}^{\mathrm{L}}-Q_{j,s,t}^{\mathrm{L},\mathrm{s}}\text{,}
    \end{align}
\end{subequations}
where $Q_{j,s,t}^{\mathrm{L}}/Q_{j,s,t}^{\mathrm{L},\mathrm{s}}$ are the reactive power consumption and load shedding at node $j$.
We assume that all the RESs are of the unity power factor and the power factor of load demand remains the same after load shedding.
$\forall~s, t$, if $j\notin \varOmega_\mathrm{E} $, $P_{j,s,t}^{\mathrm{E},\mathrm{c}}=P_{j,s,t}^{\mathrm{E},\mathrm{d}}=0$.
Similarly, if $j\notin \varOmega_\mathrm{R} $, $P_{j,s,t}^{\mathrm{R}}=P_{j,s,t}^{\mathrm{R},\mathrm{c}}=0$.

\subsection{Overall Problem Formulation}
Finally, the optimal day-ahead economic dispatch problem is formulated as \eqref{overall prob},
which is a mixed-integer linear problem, 
and can be solved by commercial solvers.
\begin{equation}\label{overall prob}
    \begin{split}
    \min_{\Xi_1,\Xi_2} \ &C^{\mathrm{DA}}+C^{\mathrm{IN,im}}+C^{\mathrm{IN,p}}\\
    s.t. \ &\eqref{pf}-\eqref{power balance},
    \end{split}
\end{equation}

where $\Xi_1=\left[ P_{t}^{\mathrm{T}}, E_j\right] $ are the decision variables of first stage (i.e., day-ahead stage).
$\Xi_2=[P_{s,t}^{\mathrm{T}+}$, $P_{s,t}^{\mathrm{T}-}$, $P_{j,s,t}^{\mathrm{E},\mathrm{c}}$, $P_{j,s,t}^{\mathrm{E},\mathrm{d}}$,
$P_{j,s,t}^{\mathrm{R},\mathrm{c}}$, $P_{j,s,t}^{\mathrm{L},\mathrm{s}}$, $D_{j,s,t}^{\mathrm{E}}$, $D_{s,t}^{\mathrm{T}}]$ are the decision variables of second stage (i.e., intraday stage). 

\section{Numerical Case Studies}\label{Case Study}
In this section, we conduct case studies on two TSSO problems to illustrate PDSR's priority.
We mainly concentrate on the first case study, which involves 
a two-stage stochastic economic dispatch of a modified IEEE 33-node ADN introduced in Section~\ref{SO_DN}.
Then, we provide a remark of applying PDSR to mixed integer problems such as unit commitment.

\subsection{Problem Description}
In the first case study on the modified IEEE 33-node ADN,
one WT is located at node 10 and two PVs are located at nodes 16 and 24, respectively. 
Beside of the uncertain RES power output, the active
load at nodes 10, 16, and 24 is also considered as random
variables, while the active load at other nodes is assumed to
have a fixed changing curve for simplicity.
An ES is located at node 13.
The voltage magnitude is restricted as $|V_i| \in [0.90,1.10]\ \text{(p.u.)},\ \forall i \in \varOmega_\mathrm{N}$. 
The time step is set as $\Delta t = 15\min$ with $T=96$ steps.
The original scenario set $\boldsymbol{\xi}$ comprises $N$ scenarios.
It is worth mentioning that, 
though PDSR does not require the specific scenario generation method,
$\boldsymbol{\xi}$ should be generated with consideration of the SBSO problem.
For example,
historical observations can be used for long-term problems (e.g., system planning),
while forecasting methods are more suitable for short-term problems (e.g., economic operation). 
Additionally, machine learning techniques (e.g., generative adversarial networks~\cite{GAN}) can be employed to enhance data quality.
In this paper, we generate $\boldsymbol{\xi}$ from the forecasting results of a real distribution network.
Additionally, 
to validate the performance of proposed PDSR framework in uncovering salient scenarios, particularly worst-case scenarios,  
we construct $\boldsymbol{\xi}$ by randomly selecting from the forecasting result, 
while also ensuring that it contains a specified number of bad scenarios~\cite{data}.
Each individual scenario $\xi\in \mathbb{R} ^{7\times T},~\xi \in \boldsymbol{\xi}$ is a multi-variable high-dimensional vector 
characterizing 7~sources of uncertainty,
including the power of 1 WT, 2 PVs, 3 loads and the day-ahead electricity price.
The capacities of WT and PVs are normalized to $1\mathrm{MW}$, $1.2\mathrm{MW}$ and $1\mathrm{MW}$.
For simplicity, the intraday balancing market price is set as $\pi_{s,t}^{\mathrm{T}+}=1.3\pi_{s,t}^{\mathrm{T}}$ 
and $\pi_{s,t}^{\mathrm{T}-}=0.7\pi_{s,t}^{\mathrm{T}}$. 
The power rating of ES is set as $0.4\mathrm{MW}$, 
and ES capacity procurement is limited by $E \leq 0.8\mathrm{MWh}$.
The initial $SoC$ of ES is $0.5$ and $\eta^{\mathrm{c}}=\eta^{\mathrm{d}}=0.95$.
The penalty costs of load shedding and RES curtailment are set as $\$1000/\mathrm{MWh}$ and $\$280/\mathrm{MWh}$, respectively. 
The TSSO problem built on $\boldsymbol{\xi}$ is used as the Benchmark. 
The optimization is coded in Python with the Cvxpy interface and solved by Gurobi 11.0 solver.
The programming environment is Intel Core i9-13900HX @ 2.30GHz with RAM 16 GB.
         
\subsection{Performance of PDSR}\label{PDSR results}
First, we consider $N=400$ scenarios and construct the $\boldsymbol{F}$ matrix by solving $N^2$ scenario-specific deterministic problems. 
Then, we utilize the MILP formulation in~\eqref{MILP} to decide $K$ through a comprehensive analysis of 
the normalized \textit{ex-ante} indices under different $\beta$. 
Notably, the simple tuning process of $\beta$ can be completed within a few optimizations, each requiries less than $3$ seconds.
The results are shown in Fig.~\ref{beta_K}. 
It is observed that
$\beta\in [120,140]$ and $K=6$ correspond to a local minimum in PDDBI,
and the balance between reduction degree $K/N$ and SPDD is also achieved.
This indicates that $6$ representative scenarios can provide the most favorable clustering structure for the dataset under analysis.
\begin{figure}[htbp] 
    \setlength{\abovecaptionskip}{-0.1cm}  
    \setlength{\belowcaptionskip}{-0.1cm}   
    \centerline{\includegraphics[width=0.95\columnwidth]{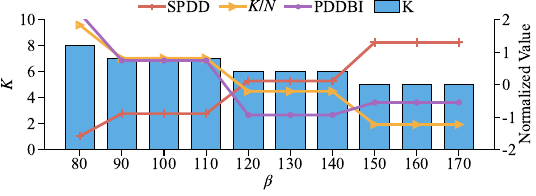}}
    \caption{The \textit{ex-ante} validity of different $\beta$.}
    \label{beta_K} 
\end{figure}

\begin{figure}[htbp]   
    \setlength{\abovecaptionskip}{-0.1cm}  
    \setlength{\belowcaptionskip}{-0.1cm}   
    \centerline{\includegraphics[width=1\columnwidth]{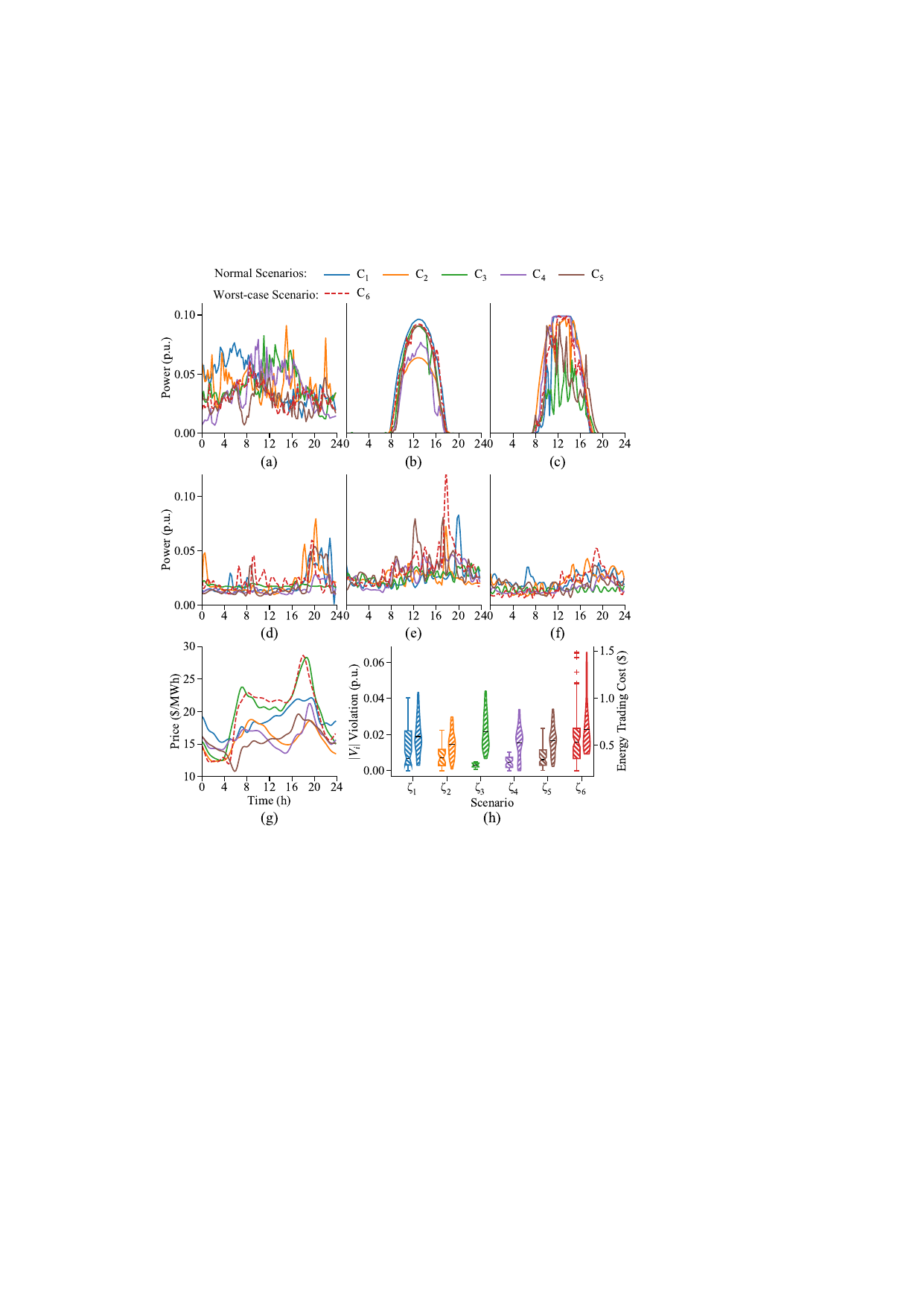}}
    \caption{Representative scenarios obtained by the PDSR framework for $N=400$ and $K=6$: 
    (a) WT at node 10, (b) PV at node 16, (c) PV at node 24, (d) load at node 10, (e) load at node 16, (f) load at node 24, (g) day-ahead electricity price and 
   (h) un-optimization voltage magnitude violations (boxplot, left) and energy trading cost (violinplot, right) of the 6~scenario clusters from AC power flow.}
    \label{clustering results}
\end{figure}

The obtained $K=6$ representative scenario clusters of the proposed PDSR framework are presented in Fig.~\ref{clustering results}(a) to Fig.~\ref{clustering results}(g).
Fig.~\ref{clustering results}(h) indicates the un-optimization voltage magnitude violations and energy trading cost of the 6~scenario clusters from AC power flow.
This highlights the severity of these scenarios and underscores the importance of implementing reasonable dispatching practices.
Curves in the same color belong to the same cluster. 
The corresponding weights of each cluster are $0.1575$, $0.165$, $0.1825$, $0.175$, $0.1425$ and $0.1775$, respectively.
The representative scenarios in Fig.~\ref{clustering results} illustrate PDSR's effectiveness
in identifying salient features from a large set of uncertainties in the system. 
Moreover, PDSR includes one worst-case scenario (red $\zeta_{6}$) in the reduced set.
Specifically, worst-case scenarios are identified by the outliers in the problem space,
which are detected by the condition $\varrho = \Delta^2 (\text{sort}(\{\sum_{j=1}^{N}F(z_{\xi_j}^\ast,\xi_i )\}_{i=1}^N)) > \text{bound}$.
$\text{sort} (\cdot)$ is the ascending sorting function,
$\Delta^2 (\cdot)$ is the second-order difference function,
and $\text{bound}$ is the threshold value, which can be set as $2$ in this case.
$\sum_{j=1}^{N}F(z_{\xi_j}^\ast,\xi_i )$ describes the decision adaptability level of 
all other potential solutions to scenario $\xi_i$. 
Scenario $\zeta_{6}$ achieves a high value of $\varrho$ and is identified as a worst-case scenario.
Besides, its significant volatility and high values, and significant voltage magnitude violations 
and trading cost also verify it as a worst-case scenario. 
This observation is critical because incorporating too many worst-case scenarios into the representative scenario set 
may introduce conservatism, potentially leading to reduced economic efficiency.

The optimality gap $OG(\%)$ is $0.09\%$, which suggests a high level of approximation accuracy. 
Notably, for all $F_{ij}$ in $\bm{F}$, the MIP-gap is $< 10^{-4}$ 
(i.e., default Gurobi \textit{MIPGap}), which ensures that the optimal solution is found
despite the adopted case study being a non-convex MILP problem. 
Finding the optimal solution contributes to achieving high approximation accuracy and low OG.
We utilize the evaluation indices of representative scenario effectiveness
to further validate the performance of the proposed PDSR framework.
The results are illustrated in Table~\ref{clustering eval tab}.
\begin{table}[htbp]
    \setlength{\abovecaptionskip}{-0.1cm}
    \setlength{\belowcaptionskip}{-0.1cm}
    \renewcommand\arraystretch{1.5} 
    \setlength{\tabcolsep}{0.18cm} 
    \caption{Evaluation results of representative scenario effectiveness for $N=400$ and $K=6$}\label{clustering eval tab}
    \begin{center}
    \begin{tabular}{ccccccc}
    \toprule 
                               &$\zeta_1$      &$\zeta_2$       &$\zeta_3$         &$\zeta_4$       &$\zeta_5$      &$\zeta_6$ \\ 
    \midrule
    \textbf{$SE_{\zeta_k}$}           &$0.29\%     $&$0.25 \%       $&$0.27\%        $&$0.28\%      $&$0.02\%  $  &$0.33\%      $\\ 
    \bottomrule   
    \end{tabular}
    \end{center}
\end{table}

Table~\ref{clustering eval tab} suggests that each component of the representative scenario set has a relatively large value of $SE_{\zeta_k}$,
indicating their significant impact on the reduced problem outcomes.
Notably, the scenario effectiveness of representative scenario $\zeta_6$, 
corresponding to the red curve in Fig.~\ref{clustering results}, is highlighted. 
This is consistent with the earlier analysis of the scenario being a worst-case scenario
and an important component of the representative scenario set.
This result demonstrates the effectiveness of the proposed PDSR framework in capturing the salient features 
of the original scenario set.
       
\subsection{Comparison with State-of-the-Art SR Methods}\label{comparative results}
To further demonstrate the benefit of the proposed PDSR framework, 
we conduct a comparative analysis. 
DDSR methods, including HC using Wasserstein distance (HC-W), $K$-means using Euclidean distances (KM-E), 
$K$-means using dynamic time warping distance (KM-D), 
$K$-medoids using Euclidean distances (KD-E), 
and Gaussian mixture model using Mahalanobis distance (G-M) are compared. 
For DDSR methods incorporating relevant problem properties,
we include $K$-means based on network power flow (KM-pf)~\cite{clusterpf},
and HC based on operational cost (HC-c)~\cite{Cost_Oriented2019} in comparison. 
Regarding other problem-dependent SR research, 
we also include the method developed in~\cite{hewitt2022decision} with graph clustering (GC),
and the method developed in~\cite{Bertsimas} using Wasserstein distance metric and alternating minimization algorithm (AM-W)
for comparison.
Additionally, the method of constructing the representative scenario set 
based on worst-case statistical indicators in the distribution space (WS) is also included for comparison.

The comparative indices include the \textit{ex-post} SR performance indices and the computation efficiency indices.
In this comparison, we concentrate on how the representative scenario set impact the problem outcomes.
Thus, the considered SR performance indices include:
the number of worst-case scenarios captured in the representative scenario set $(\kappa)$, 
which indicates the SR method's effectiveness in identifying ``worst-case'' scenarios;
the $OG(\%)$ defined in~\eqref{OG in eval index}, which measures the problem optimality approximation accuracy;
the ES capacity procurement $E$ from solving~\eqref{reduced SO}, which measures the 
ability of SR methods in capturing the underlying risk characteristics in the original scenario set
and making risk provisions;
the average verification penalty cost $\widetilde{C}^{\mathrm{IN,p}}=1/N\sum_{\xi \in \boldsymbol{\xi}}C_{\xi}^{\mathrm{IN,p}}$,
where the penalty cost $C_{\xi}^{\mathrm{IN,p}}$ is derived from $F(z^\ast_{\boldsymbol{\zeta}},\xi)$.
The considered computation efficiency indices include:
the time required to process the data input for clustering $(\tau_\mathrm{p})$,
and the time required to solve the clustering problem $(\tau_\mathrm{c})$.
The comparison results are presented in Table~\ref{clustering compare tab}.
\begin{table}[htbp]
    \setlength{\abovecaptionskip}{-0.1cm}
    \setlength{\belowcaptionskip}{-0.1cm}
    \renewcommand\arraystretch{1.5} 
    \setlength{\tabcolsep}{0.16cm} 
    \caption{Comparing SR methods for $N=400$ and $K=6$ 
    }\label{clustering compare tab}
    \begin{center}
    \begin{tabular}{ccccccc}
    \toprule 
    Method & $\kappa$ & $OG(\%)$  & $E (\mathrm{MWh})$ & $\widetilde{C}^{\mathrm{IN,p}}(\$)$&$\tau_\mathrm{p} (s)$ & $\tau_\mathrm{c} (s)$ \\ 
    \midrule
    Benchmark& $12$ & $0$  & $0.17$ &$155.1$ & $-$ & $-$ \\
    PDSR & $1$& $0.09$  & $0.16$ &$162.8$ & $< 0.3$ & $2.73$ \\
    AM-W~\cite{Bertsimas}& $1$  & $0.51$ & $0.07$ & $199.3$& $<0.3$ & $22.9$ \\ 
    GC~\cite{hewitt2022decision}& $1$ & $0.57$  & $0.06$ &$206.9$ & $< 0.3$ & $0.11$ \\
    WS& $3$ & $0.83$  & $0.58$ &$67.53$ & $0.01$ & $-$ \\
    HC-c~\cite{Cost_Oriented2019}& $0$ & $1.70$  & $0$ &$250.6$ & $<0.8$ & $0.01$ \\
    KM-pf~\cite{clusterpf}& $0$ & $0.84$  & $0.04$ & $221.3$ &$<0.8$ & $0.59$ \\
    G-M& $0$ & $1.59$  & $0.01$ &$247.5$ & $-$ & $4.62$ \\
    KM-D& $0$ & $1.71$  & $0$ &$250.6$ & $-$ & $266.5$ \\
    KM-E& $0$ & $0.98$  & $0.03$ &$227.8$ & $-$ & $0.31$ \\
    KD-E & $0$ & $0.77$ & $0.04$ & $217.7$ & $-$ & $1.46$ \\ 
    HC-W& $0$ & $0.66$  & $0.04$ & $214.9$ &$19.54$ & $0.03$ \\ 
    \bottomrule  
    \end{tabular}
    \end{center}
\end{table}

\textit{\textbf{Optimality Gap:}} The observations from Table~\ref{clustering compare tab} suggest that
the PDSR framework, with a small value of $OG(\%)=0.09\%$, 
significantly outperforms other state-of-the-art SR methods. 
Besides, the ES capacity procurement $E$ and $\widetilde{C}^{\mathrm{IN,p}}$
of PDSR also best approximate the results of Benchmark.
The DDSR methods, seeking minimum statistical difference, 
fail to capture worst-case scenarios in their representative scenario sets, 
which leads to a neglect of potential risks during the operation, 
resulting in small procurements of ES capacity and an inability to cope with uncertainties, 
thus achieving relatively high penalty costs and $OG(\%)$.
For instance, in heavy load situations, the node voltage might drop below safety requirements. 
Without the support of ES, the ADN must resort to lots of load shedding to prevent violating the voltage safety constraints, 
thereby incurring substantial penalty costs. 
WS selects $6$ statistical worst-case scenarios in the distribution space, 
but only $3$ of them are real worst-case scenarios in the problem space. 
This discrepancy highlights that severity in statistical metrics does not necessarily equate to severity in problem outcomes. Moreover, focusing only on worst-case scenarios may result in overly conservative decisions and unnecessarily high costs.
WS procures too much ES capacity as $E=0.58\mathrm{MWh}$, achieving a low penalty cost, while also
leading to a relatively high $OG(\%)=0.83\%$.
Regarding other problem-dependent SR research,
The GC method in~\cite{hewitt2022decision} captures 1 worst-case scenario with $OG(\%)=0.51\%$, 
while the AM-W method in~\cite{Bertsimas} also captures 1 worst-case scenario with $OG(\%)=0.57\%$.
Those results are much lower than the DDSR methods.
This indicates that measuring the difference between scenarios by the symmetric opportunity cost
can enhance the SR performance.
However, GC relies on heuristic methods to derive representative scenarios, 
whereas AM-W utilizes the alternating minimization algorithm,
both of which can potentially yield suboptimal outcomes.
Additionally, the method developed in~\cite{keutchayan2023problem} fails to solve the clustering process within 3~hours 
as their clustering methodology does not scale well with high-dimensional power system problems
with complex problem structure and large scenario sets.
Compared to the above methods, 
the proposed PDSR framework efficiently considers the potential impacts of the scenarios on the problem, 
and include one reasonable worst-case scenario in the representative scenario set, 
as analyzed in Section~\ref{PDSR results}. 
These comparative findings suggest that PDSR exhibits superior accuracy in representing the original scenario set, 
thereby offering more reliable information for decision-making in power system energy management under uncertainty.

\textit{\textbf{Representative Scenario Effectiveness:}} 
In Fig.~\ref{simeff}, the comparative results of representative scenario effectiveness are depicted.
For DDSR methods, removing some of the representative scenarios does not significantly impact the problem outcomes, 
as evidenced by the minimal changes in $OG(\%)$.
While in PDSR, such a removal can significantly alter the problem outcomes.
This indicates that the proposed PDSR framework can effectively capture the salient scenarios with significant 
impacts on the SBSO problem.
Furthermore, all $OG_{\bm{\zeta}_{-k}}(\%)$ results of PDSR are much lower than the DDSR methods, indicating that statistically proximity in the distribution space 
does not equal to strategy closeness and better solution approximation in the problem space.
Besides, the comparison between GC, AM-W and PDSR indicates that the proposed MILP clustering methodology 
is more effective than the graph clustering employed in GC and the alternating minimization algorithm in AM-W.
\begin{figure}[htbp]
    \setlength{\abovecaptionskip}{-0.1cm}  
    \setlength{\belowcaptionskip}{-0.1cm}   
    \centerline{\includegraphics[width=0.95\columnwidth]{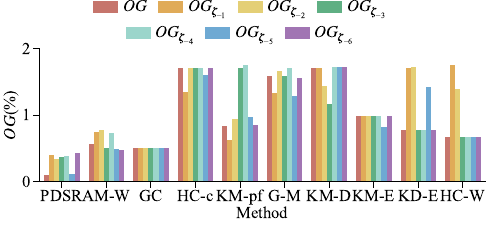}}
    \caption{Comparative results of representative scenario effectiveness for $N=400$ and $K=6$.}
    \label{simeff}  
\end{figure}

\textit{\textbf{Computational Efficiency:}} 
The Benchmark struggles with computational complexity when $N$ is large. For example, for $N=400$ and $N=600$, 
the Benchmark takes nearly 51 and 75~mins, respectively. 
Moreover, for $N> 600$, computation times become impractical for the Benchmark. 
The DDSR methods, however, overcome computation bottlenecks, but at the price of potentially large $OG(\%)$ values.
Problem-dependent SR research, GC, AM-W and PDSR decompose the original SBSO problem into mostly parallelizable and simple scenario-specific subproblems.
In this way, the computation complexity is greatly reduced.
Specifically, all GC, AM-W and PDSR involve $N$ parallel computations followed by $N(N-1)$ parallel computations to determine matrix $\boldsymbol{F}$.
In this paper, the computation time for each scenario-specific subproblem is $\tau_\text{s} < 0.3s$ (with LinDistFlow model). 
Theoretically, with $N(N-1)$ parallel processes available, the computation time for calculating the $\boldsymbol{F}$ matrix can be reduced to $2 \times \tau_\text{s} = 0.6$s~\footnote{Replacing LinDistFlow with its more accurate second-order conic relaxation, $\boldsymbol{F}$ can still be calculated in parallel within $2 \times \tau_\text{s} = 3.2$s.}. 
For $N=400$ and $K=6$, after calculating the $\boldsymbol{F}$ matrix, the MILP clustering problem is solved consistently with MIP-gap with $\tau_\text{c} < 3$s. 
We define $\tau_\text{o}(K)$ as the time required to solve~\eqref{reduced SO} with $K$ representative scenarios and $\tau_\text{o}(6)<3s$.
Generally, with $W$~parallel processors, the total computation time of the PDSR is 
\begin{align*}
\tau &:= \tau_\text{p} + \tau_\text{c} + \tau_\text{o}(K) \\
&= \left(\lceil N/W \rceil + \lceil N(N-1)/W \rceil\right)\tau_\text{s} + \tau_\text{c} + \tau_\text{o}(K).
\end{align*}

For example, with $N=400$ and a practical $W=100$ processors, $\tau \approx  8$~mins (or 15.7\% of the Benchmark). 
Furthermore, as long as the original scenario set and SBSO problem formulation remain unchanged, 
matrix $\boldsymbol{F}$ can be stored and re-used.
In conclusion, the PDSR framework can effectively reduce computational complexity while achieving a low SR optimality gap.

\textit{\textbf{Scalability: }}
We further compare the $OG(\%)$ results under different $N$ and $K$, as illustrated in Fig.~\ref{NK}.
In Figs.~\ref{NK}(a,b),
we observe that the proposed PDSR framework outperforms DDSR methods and GC across all values of $N$ and $K$, 
as evidenced by its consistently lower $OG(\%)$. 
This underscores the distinct superiority of the PDSR framework in identifying salient scenarios. 
Interestingly, the PDSR results (in red) with $N=100$ have smaller $OG(\%)$ than the DDSR methods even for $N=400$, 
which means that to attain a comparable level of OG, the PDSR framework requires far fewer scenarios.
Besides,
as expected, increasing $K$ decreases the $OG(\%)$ for PDSR. 
However, for DDSR methods, GC and AM-W, this inherent regular benefit is not present. 
Furthermore, we conduct the performance comparison with a large original scenario set, using $N=1000$ and $K=10$ (i.e., a 99\% reduction) as an example in Fig.~\ref{NK}(c).
In this case, the original SBSO problem in~\eqref{SAA SO} is computationally intractable. 
Therefore, we use $F(z^\ast_{\boldsymbol{\zeta}},\boldsymbol{\xi})$ as the performance evaluation metric, instead of $OG(\%)$. 
A smaller value of $F(z^\ast_{\boldsymbol{\zeta}},\boldsymbol{\xi})$ indicates a better approximation to the original SBSO optimality.
Notably, PDSR outperforms other SR methods with the lowest $F(z^\ast_{\boldsymbol{\zeta}},\boldsymbol{\xi})$. 
Based on the representative scenarios from PDSR, the day-ahead SBSO problem obtains superior operational strategies with high adaptability and effectiveness across different scenarios. 
Moreover, for $N=1000$ and $K=10$, we have $\tau_\mathrm{c}=45.8s$, indicating that the MILP formulation can be solved efficiently.
In conclusion, the above comparative analysis further emphasizes the benefit of the proposed PDSR framework.

\begin{figure}[htbp] 
    \setlength{\abovecaptionskip}{-0.1cm}   
    \setlength{\belowcaptionskip}{-0.1cm}  
    \centerline{\includegraphics[width=0.95\columnwidth]{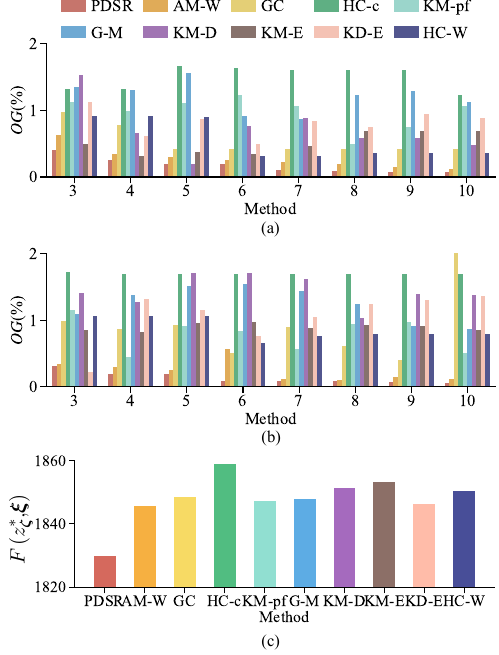}}
    \caption{Comparing results for different $N$ and $K$: (a) $N=100$, (b) $N=400$ and (c) $N=1000$ and $K=10$.}
    \label{NK}

\end{figure}

\begin{remark}[PDSR for mixed-integer problems]
Consider an example of a two-stage stochastic unit commitment problem applied to the IEEE 14-bus system, where nearly half of the decision variables are binary.
In this example, five generators are available and one WT is located at bus 10, and two PVs are located at buses 12 and 14. 
The time step is set as $\Delta t=1~\text{hour}$ with $T=24$ steps.
The uncertainties stem from the RES power output and the load demand at the RES nodes, 
resulting in six distinct sources of uncertainty. 
The original scenario set $\boldsymbol{\xi}$ contains $N=400$ scenarios
and is reduced to $K=5$ representative scenarios.
The detailed problem formulation is presented in Appendix~\ref{UC}.
To be concise and comprehensive, we mainly show the comparison results of PDSR 
with other SR methods.
The comparison indices include the number of worst-case scenarios captured $\kappa$, 
the optimality gap $OG(\%)$, 
the intraday operational cost $\widetilde{C}^{\mathrm{IN}}(\text{M}\$) = 1/N\sum_{\xi \in \boldsymbol{\xi}}(\widetilde{C}^{\mathrm{IN,g}} + \widetilde{C}^{\mathrm{IN,p}})$
($\widetilde{C}^{\mathrm{IN,g}}$ and $\widetilde{C}^{\mathrm{IN,p}}$ are from $F(z^\ast_{\boldsymbol{\zeta}},\xi)$),
as well as the preprocessing time $\tau_\mathrm{p} (s)$, and clustering time $\tau_\mathrm{c} (s)$.
The comparative results are summarized in  Table~\ref{UC clustering compare tab}.

Table~\ref{UC clustering compare tab} indicates that the proposed PDSR framework achieves the lowest $OG(\%)=0.20\%$,
which is significantly lower than the other SR methods.
Additionally, the intraday operational cost $\widetilde{C}^{\mathrm{IN}}$ 
of PDSR closely approximates the Benchmark results, 
demonstrating PDSR's effectiveness in facilitating the decision-making process for generating robust 
and cost-effective operational strategies. 
Moreover, $\tau_\mathrm{p}$ with parallel setting of PDSR is within acceptable limits.
$\tau_\mathrm{c}$ is quite different from the result in Table~\ref{clustering compare tab} but is still acceptable,
which is because that the different problem structure lead to different solving time of~\eqref{MILP}.
The superiority of PDSR on the mixed integer problem further demonstrates the effectiveness of the proposed PDSR framework.
\end{remark}

\begin{table}[htbp]
    \setlength{\abovecaptionskip}{-0.1cm}
    \setlength{\belowcaptionskip}{-0.1cm}
    \renewcommand\arraystretch{1.5} 
    \setlength{\tabcolsep}{0.2cm} 
    \caption{Comparing SR methods for Mixed Integer UC Problem\\ ($N=400$ and $K=5$) 
    }\label{UC clustering compare tab}
    \begin{center}
    \begin{tabular}{ccccccc}
    \toprule 
    Method & $\kappa$ & $OG(\%)$   & $\widetilde{C}^{\mathrm{IN}}(\text{M}\$)$&$\tau_\mathrm{p} (s)$ & $\tau_\mathrm{c} (s)$ \\ 
    \midrule
    Benchmark& $31$ & $0$   &$773.3$ & $-$ & $-$ \\
    PDSR & $2$& $0.20$   &$712.3$ & $< 0.1$ & $18.62$ \\
    AM-W~\cite{Bertsimas}& $2$ & $0.36$  & $671.6$ & $<0.1$ & $1.26$ \\ 
    GC~\cite{hewitt2022decision}& $1$ & $0.42$  &$623.1$ & $< 0.1$ & $0.36$ \\
    WS& $3$ & $1.52$   &$783.4$ & $0.01$ & $-$ \\
    HC-c~\cite{Cost_Oriented2019}& $0$ & $0.52$   &$650.6$ & $<0.7$ & $0.01$ \\
    KM-pf~\cite{clusterpf}& $0$ & $0.44$   & $697.9$ &$<0.7$ & $4.78$ \\
    G-M& $0$ & $0.48$   &$654.6$ & $-$ & $0.80$ \\
    KM-D& $0$ & $0.69$   &$650.5$ & $-$ & $8.94$ \\
    KM-E& $0$ & $0.71$   &$650.9$ & $-$ & $2.87$ \\
    KD-E & $0$ & $0.69$  & $651.6$ & $-$ & $1.23$ \\ 
    HC-W& $0$ & $0.82$   & $653.4$ &$8.24$ & $0.02$ \\ 
    \bottomrule  
    \end{tabular}
    \end{center}
\end{table}

\subsection{Summarizing Discussion on PDSR Framework}
For SBSO problems,
the definition of ``representativeness'' is essential to construct the representative scenario set and yield effective strategies.
In this paper, we demonstrate, both through theoretical analysis and numerical validation, 
that the ``representativeness'' should be defined as the decision applicability of the representative scenario to its represented scenario cluster.
The advantages of the proposed PDSR framework lie in the following aspects:
\begin{enumerate}[label=(\roman*)]
    \item \textit{Representativeness}: With the problem space constructed from decision applicability,
        the PDSR framework successfully achieves a low optimality gap, 
       demonstrating a significant level of representativeness.
    \item \textit{Efficiency}: The PDSR framework successfully captures the scenarios with significant impacts on the problem,
        especially the worst-case scenarios, enhancing the robustness and reliability.
        Moreover, in comparison to other SR methods, the PDSR framework effectively reduces the number of required scenarios with the same level of optimality gap,
        which is beneficial in cases with limited monitoring and data accumulation.
    \item \textit{Determinateness}: Instead of using heuristic methods, 
        the PDSR framework transforms the process of scenario partitioning and representative scenario selection into a MILP formulation, which attains deterministic and optimal outcomes with commercial solvers.
    \item \textit{Generality}: The proposed PDSR framework operates without reliance on probability distribution, and only has
        limited and reasonable assumptions on the SBSO problem structure. As a result, the PDSR framework can be applied to a broad
        range of SBSO problems, ensuring high generality and scalability.
\end{enumerate}

Meanwhile, the potential limitations for PDSR lie in the following aspects:

\begin{enumerate}[label=(\roman*)]
    \item \textit{Scalability}: 
        Large-scale power systems present scalability challenges for PDSR due to the curse of dimensionality. 
        Firstly, to compute $\bm{F}$, PDSR requires $N + N(N-1)$ parallel computations, which can be a significant computation burden. Second, the resulting $N$-by-$N$ $\bm{F}$ begets a large-scale MILP with $N(N+1)$ binary variables (albeit with a sparse structure).
        Potential techniques for solving large-scale MILP include warm-start, neural networks~\cite{MILPNN,MILPNN1}, and
        decomposition methods~\cite{DecompositionMILP1,DecompositionMILP2}.
        Third, complex optimization frameworks (e.g., multi-stage stochastic optimization), 
        further increase the computational complexity of scenario-specific subproblems.
    \item \textit{Tractability}: 
        PDSR depends on the resolution of scenario-specific subproblems to construct the $\bm{F}$ matrix.
        Consequently, issues that cannot be resolved through a single scenario, such as the Conditional Value-at-Risk (CVaR) 
        component within the objective function, are beyond PDSR's scope of solvability.
\end{enumerate}

\section{Conclusion}\label{Conclusion}
In this paper, a novel problem-driven scenario reduction (PDSR) framework is proposed for power system SBSO problems, 
which fully incorporates the problem structure into the SR process.
Specifically, we utilize the mutual decision applicability to construct the problem space as the input for SR, 
and propose the problem-driven distance metric to measure the similarity of scenarios in problem space.
That is, PDSR decomposes the original large-scale and complex optimization problem into independent and simpler scenario-specific subproblems, 
thereby significantly decreasing computational complexity.
Thus, the presented PDSR framework obtains near-optimal approximation accuracy with just a few salient representative scenarios within acceptable computation time, as illustrated 
by extensive case studies: one ADN dispatch problem balancing operational energy storage capacity and economic costs, and another addressing a mixed-integer unit commitment problem.
Moreover, a comprehensive comparative analysis with other SR methods is provided for different $N$ and $K$ values and demonstrates broadly the superior performance of our PDSR framework.

Future work will focus on further scaling Algorithm~1 by filtering the original scenario set for PDSR to reduce the size of the $\boldsymbol{F}$ matrix necessary to guarantee the desired optimality gap.
Additionally, PDSR will benefit from extending the analysis to characterize the impacts of local solutions arising from non-convex problems, such as with the AC optimal power flow. Lastly, we are interested in extending the PDSR framework to 
multi-stage stochastic problems and other SBSO problems (e.g., CVaR) relevant to power engineering.

\appendix

\subsection{Proof of Proposition~\ref{prop1}} \label{appendix}
In this part, we prove that the proposed problem-driven distance metric in~\eqref{distance} satisfies the required properties of~\eqref{lemma}.

\begin{proof}  C1)\ \textit{Consistency}:
First we notice that $\zeta_k=\xi_i \Rightarrow d(\zeta_k,\xi_i)=0$. 
Conversely, given that both $c(\zeta_k,\xi_i),\ c(\xi_i,\zeta_k)$ are nonnegative, 
$d(\zeta_k,\xi_i)=0$ implies $c(\zeta_k,\xi_i)=c(\xi_i,\zeta_k)=0$, thus $z_{\zeta_k}^\ast=z_{\xi_i}^\ast$. 
We reasonably require the problem to satisfy the assumption that $z_{\zeta_k}^\ast=z_{\xi_i}^\ast \Rightarrow \zeta_k=\xi_i$.
This hypothesis rests on the premise that $F(z,\xi)$ 
is highly sensitive to variations in $\xi$ at particular $z$,
suggesting that identical solutions imply identical scenarios. 
This assumption depends on the problem structure and can be restrictive. 
For certain problems dissatisfy this assumption, 
we can adjust the PDD by simply incorporating a regularized scaled norm component ($\mu>0$) as
\begin{equation}\label{distance 1}
    \begin{split}
            \tilde{d}(\xi_i,\zeta_k) &=F(z_{\zeta_k}^\ast,\xi _i )-F(z_{\xi_i}^\ast,\xi_i)\\
            &\quad  +F(z_{\xi_i}^\ast,\zeta _k )-F(z_{\zeta_k}^\ast,\zeta_k) +\mu\|\zeta_k-\xi_i \Vert _2\text{.}
    \end{split}
\end{equation}

In this case, $\tilde{d}(\xi_i,\zeta_k)=0 \Rightarrow \zeta_k=\xi_i$ holds for any $\mu$.
In this paper, we set $\mu=0$ and continue to use $d(\zeta_k,\xi_i)$ for brevity, 
but all the proofs and algorithms can be adapted for $\tilde{d}(\xi_i,\zeta_k)$.

C2)\ \textit{Symmetricity}: 
From definition, $d(\zeta_k,\xi_i)=d(\xi_i, \zeta_k)$.

C3)\ \textit{Convergence}:
As $\delta \rightarrow 0$, 
$\zeta_k$ and $\xi_i$ become arbitrarily close. 
Given the Lipschitz continuity of $F(z,\xi)$ with respect to $\xi$, 
it follows that $F(z_{\zeta_k}^\ast,\xi _i )\rightarrow F(z_{\zeta_k}^\ast,\zeta_k)$ and $F(z_{\xi_i}^\ast,\xi_i) \rightarrow F(z_{\xi_i}^\ast,\zeta _k )$. 
Consequently, $d(\zeta_k,\xi_i)$ tends to 0.

C4)\ \textit{Triangle inequality}:
Let $\lambda(\xi_i) = 2 \sup_{z \in Z}|F(z,\xi_i)|$, 
given $F(z,\xi)$ is bounded for all $z$ and $\xi$.
We have $F(z_{\zeta_k}^\ast,\xi _i )-F(z_{\xi_i}^\ast,\xi_i)<|F(z_{\zeta_k}^\ast,\xi _i )|+|F(z_{\xi_i}^\ast,\xi_i)|<\lambda(\xi_i)$, 
and similar for $ \lambda(\zeta_k)$.
Thus, we have $d(\zeta_k,\xi_i) < \lambda(\zeta_k)+\lambda(\xi_i)$. 
\end{proof}

\subsection{Formulation of the Two-Stage Stochastic Unit Commitment Problem}\label{UC}
The objective of the two-stage stochastic unit commitment problem is to minimize the total operation cost,
which includes the day-ahead generation cost $C^{\mathrm{DA,g}}$, intraday generation regulation cost $C^{\mathrm{IN,g}}$, 
and intraday punishment cost $C^{\mathrm{IN,p}}$ of RES curtailment and load shedding.

\begin{subequations}\label{obj}
    \begin{align}
        \label{obj total}
        & \min_{\Xi_1,\Xi_2} \ C^{\mathrm{DA,g}}+C^{\mathrm{IN,g}}+C^{\mathrm{IN,p}}\\
        \label{CDAg}
        & C^{\mathrm{DA,g}} = \sum_{t=1}^{T} \sum_{g=1}^{N_\mathrm{G}} (C_g^{\mathrm{PG}} P_{g,t} +C_g^{\mathrm{NL}} u_{g,t} + C_{g,t}^{\mathrm{SC}})  \\
        \label{CINg}
        &C^{\mathrm{IN,g}} = \sum_{s=1}^{S} \omega_{s}\sum_{t=1}^{T} \sum_{g=1}^{N_\mathrm{G}} (C_g^{+}P_{g,s,t}^{+}+C_g^{-}P_{g,s,t}^{-}) \\
        \label{CINp}
        & C^{\mathrm{IN,p}} = \sum_{t=1}^{T} \Delta t(\sum_{r=1}^{N_\mathrm{R}} C^{\mathrm{R,c}} P_{r,s,t}^{\mathrm{R, c}}
                    + \sum_{l=1}^{N_\mathrm{L}} C^{\mathrm{L, s}} P_{l,s,t}^{\mathrm{L, s}})\text{,}
\end{align}
\end{subequations}
where $P_{g,t}$ is the power output of generator $g$ at time $t$,
$u_{g,t}$ is the binary variable indicating whether the generator is on ($u_{g,t}=1$) or off ($u_{g,t}=0$).
$P_{g,s,t}^{+}/P_{g,s,t}^{-}$ are the up/down regulation power of generator $g$ in scenario $s$ at time $t$,
$C_g^{\mathrm{PG}}/C_g^{\mathrm{NL}}$ are the linear generation cost coefficients of generator $g$,
$C_g^{+}/C_g^{-}$ are the generator up/down regulation costs,
$C_{g,t}^{\mathrm{SC}}$ is the start-up and shut-down cost.
$P_{r,s,t}^{\mathrm{R, c}}/C^{\mathrm{R, c}}$ are the curtailed power of RES farm $r$ and the associated penalty cost.
$P_{l,s,t}^{\mathrm{L, s}}/C^{\mathrm{L, s}}$ are the load shedding of load demand $l$ and the associated penalty cost.
$\omega_{s}$ is the probability of scenario $s$.
$N_\mathrm{G}/N_\mathrm{L}/N_\mathrm{R}$ are the number of generators/regular loads/RES farms.

The constrains are as follows:
\begin{subequations}
    \begin{align}
        \label{DCpf}
        &P_{ij,s,t} = B_{ij} (\theta_{i,s,t} - \theta_{j,s,t}) \\
        \label{PowerBalance}
        & (P_{g,t}+P_{g,s,t}^{+}-P_{g,s,t}^{-}) +(P_{r,s,t}^{\mathrm{R}}-P_{r,s,t}^{\mathrm{R, c}})- (P_{l,s,t}^{\mathrm{L}}-P_{l,s,t}^{\mathrm{L, s}}) \notag \\
        &= -(\sum\nolimits_{j \rightarrow i} P_{ji,s,t} - \sum\nolimits_{i \rightarrow k} P_{ik,s,t}) \\
        \label{referenceAngle}
        &\theta_{1,s,t} = 0 \text{,}\ -1/3\pi\leq \theta_{i,s,t} \leq 1/3\pi\\
        \label{secondstage}
        &P_{g,s,t} = P_{g,t}+P_{g,s,t}^{+}-P_{g,s,t}^{-}\\
        \label{ONOFF}
        &v_{g,t} = u_{g,t} - u_{g,t-1} \\
        \label{upregulation}
        & P_{g,s,t}^{+}\leq \overline{P_{g}}u_{g,t}\text{,} \ P_{g,s,t}^{-}\leq \overline{P_{g}}u_{g,t} \\
        \label{downregulation}
        & P_{g,s,t}^{+}\leq \overline{P_{g}}D_{g,s,t}\text{,} \ P_{g,s,t}^{-}\leq \overline{P_{g}}(1-D_{g,s,t}) \\
        \label{power range}
        &u_{g,t}{\underline{P}_{g}} \le P_{g,t} \le u_{g,t}{\overline{P_{g}}} \\
        \label{power range s}
        &u_{g,t}{\underline{P}_{g}} \le P_{g,s,t} \le u_{g,t}{\overline{P_{g}}} \\
        \label{ramp rate s}
        &RD_g \le P_{g,s,t} - P_{g,s,t-1} \le RU_g \\
        \label{ramp rate}
        &RD_g \le P_{g,t} - P_{g,t-1} \le RU_g \\
        \label{ONOFF cost}
        &C_{g,t}^{\mathrm{SC}}\geq v_{g,t} C_g^{\mathrm{ON}}\text{,} \ C_{g,t}^{\mathrm{SC}}\geq -v_{g,t} C_g^{\mathrm{OFF}} \\
        \label{ON Time}
        &\sum\nolimits_{\tau=t+1}^{t+T_g^{\mathrm{on}}} u_{g,\tau} \geq v_{g,t}T_g^{\mathrm{on}} \\
        \label{OFF Time}
        &\sum\nolimits_{\tau=t+1}^{t+T_g^{\mathrm{off}}} (1-u_{g,\tau}) \geq -v_{g,t}T_g^{\mathrm{off}} \\
        \label{windcurtailment}
        & 0 \leq P_{r,s,t}^{\mathrm{R, c}} \leq P_{r,s,t}^{\mathrm{R}} \\
        \label{loadshedding}
        & 0 \leq P_{l,s,t}^{\mathrm{L, s}} \leq P_{l,s,t}^{\mathrm{L}}\\
        \label{state}
        & D_{g,s,t} \in \{0,1\}\text{,}\ u_{g,t} \in \{0,1\}\text{,} 
    \end{align}
\end{subequations}
where~\eqref{DCpf}-\eqref{referenceAngle} are the DC power flow equations,
~\eqref{ONOFF},~\eqref{ON Time},~\eqref{OFF Time} are the generation on/off status and time constraints,
~\eqref{secondstage}, \eqref{upregulation}-\eqref{power range s} are the generation power range constraints,
~\eqref{ramp rate s}-\eqref{ramp rate} are the generation ramp rate constraints,
~\eqref{windcurtailment}-\eqref{loadshedding} are the RES curtailment and load shedding constraints.
$P_{ij,s,t}$ is the power flow of line $ij$ at scenario $s$ time $t$,
$B_{ij}$ is the susceptance of line $ij$,
$\theta_{i,s,t}$ is the phase angle.
$P_{r,s,t}^{\mathrm{R}}/P_{l,s,t}^{\mathrm{L}}$ are the power of RES farm and load demand, respectively.
$v_{g,t}$ is binary and indicates the on/off status of generator $g$ at time $t$.
$\overline{P_{g}}/\underline{P}_{g}$ are the maximum/minimum generator power output.
$D_{g,s,t}$ is the binary variable indicating the up/down generator regulation status.
$RD_g/RU_g$ are the generator ramp down and ramp up limits.
$C_g^{\mathrm{ON}}/C_g^{\mathrm{OFF}}$ are the generator start-up and shut-down cost.
$T_g^{\mathrm{on}}/T_g^{\mathrm{off}}$ are the minimum up and down time of generator $g$.
$\Xi_1=[P_{g,t},u_{g,t}]$ is the set of decision variables at the first stage.
$\Xi_2=[D_{g,s,t}, P_{g,s,t}^{+},P_{g,s,t}^{-},P_{r,s,t}^{\mathrm{R, c}},P_{l,s,t}^{\mathrm{L, s}}]$ is the set of decision variables at the second stage.

\bibliographystyle{IEEEtran}
\bibliography{IEEEabrv,PDSR}

\begin{IEEEbiography}[{\includegraphics[width=1in,height=1.25in,clip,keepaspectratio]{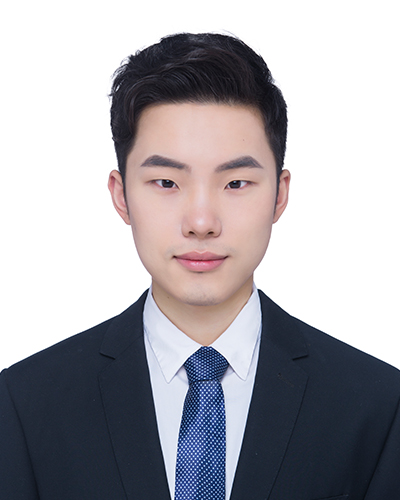}}]{Yingrui Zhuang}
    (Student Member, IEEE) was born in 1999. He received the B.S. degree in electrical engineering from Tsinghua University, Beijing, China, in 2021. 
    He is currently pursuing the Ph.D. degree in electrical engineering with the State Key Laboratory of Power System Operation and Control, 
    Department of Electrical Engineering, Tsinghua University. 
    His current research interests include risk analysis of distribution systems with high renewable energy resources penetration and data-driven distributed resources coordination and optimization.
\end{IEEEbiography}
\vspace{11pt}

\begin{IEEEbiography}[{\includegraphics[width=1in,height=1.25in,clip,keepaspectratio]{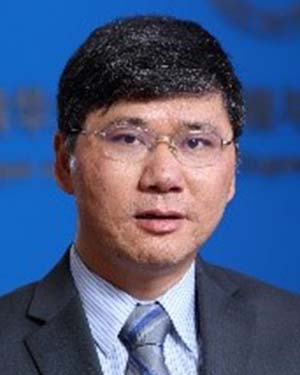}}]{Lin Cheng}
    (Senior Member, IEEE), was born in 1973. 
    He received the B.S. degree in electrical engineering from Tianjin University, Tianjin, China, in 1996 and the Ph.D. degree from Tsinghua University, Beijing, China, in 2001. 
    Currently, he is a tenured Professor in the Department of Electrical Engineering at Tsinghua University, serving as the deputy director of the State Key Laboratory of Power System Operation and Control, and is also a Fellow of IET.
    His research interests include operational reliability evaluation and application of power systems, 
    operation optimization of distribution systems with flexible resources, and perception and control of uncertainty in wide-area measurement systems.
\end{IEEEbiography}
\vspace{11pt}

\begin{IEEEbiography}[{\includegraphics[width=1in,height=1.25in,clip,keepaspectratio]{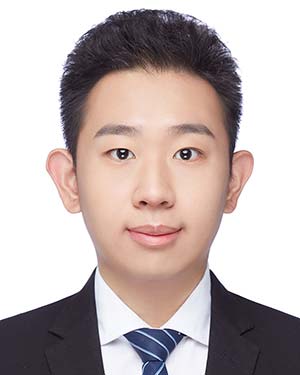}}]{Ning Qi}
    (Member, IEEE) was born in 1996. He received a B.S. degree in Electrical Engineering from Tianjin University, Tianjin, China, in 2018 and the Ph.D. degree in Electrical Engineering from Tsinghua University, Beijing, China, in 2023. He is currently a postdoctoral research scientist in Earth and Environmental Engineering at Columbia University. Before joining Columbia University, he was a visiting scholar at the Technical University of Denmark in 2022. He was a research associate in Electrical Engineering at Tsinghua University in 2024.
    His current research focuses on data-driven modeling, optimization under uncertainty, and market design for power systems with generalized energy storage.  He is currently serving as the Youth Editorial Board Member Power System Protection and Control, and the guest editor of Processes and Frontier in Energy Research.
\end{IEEEbiography}

\begin{IEEEbiography}[{\includegraphics[width=1in,height=1.25in,clip,keepaspectratio]{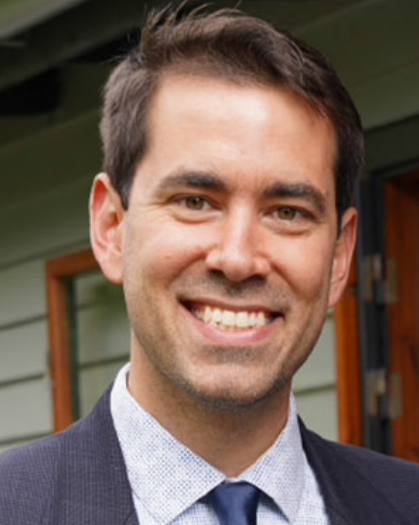}}]{Mads R. Almassalkhi} (M’13, SM’18) received his dual degree in electrical engineering and applied mathematics from the University of Cincinnati, Ohio, in 2008, and the Ph.D. degree in electrical engineering: systems from the University of Michigan in 2013. He is currently the L. Richard Fisher Associate Professor with the Department of Electrical and Biomedical Engineering at The University of Vermont. He also holds a joint appointment with Pacific Northwest National
Laboratory as Chief Scientist and co-founded clean-tech startup company, Packetized Energy. Before joining the University of Vermont, he was with another energy startup company Root3 Technologies. His research interests lie at the intersection of power systems, mathematical optimization, and control systems. He is currently serving as Chair of the IEEE CSS Technical Committee on Smart Grids and Associate Editor of IEEE Transactions on Power Systems.
\end{IEEEbiography}

\begin{IEEEbiography}[{\includegraphics[width=1in,height=1.25in,clip,keepaspectratio]{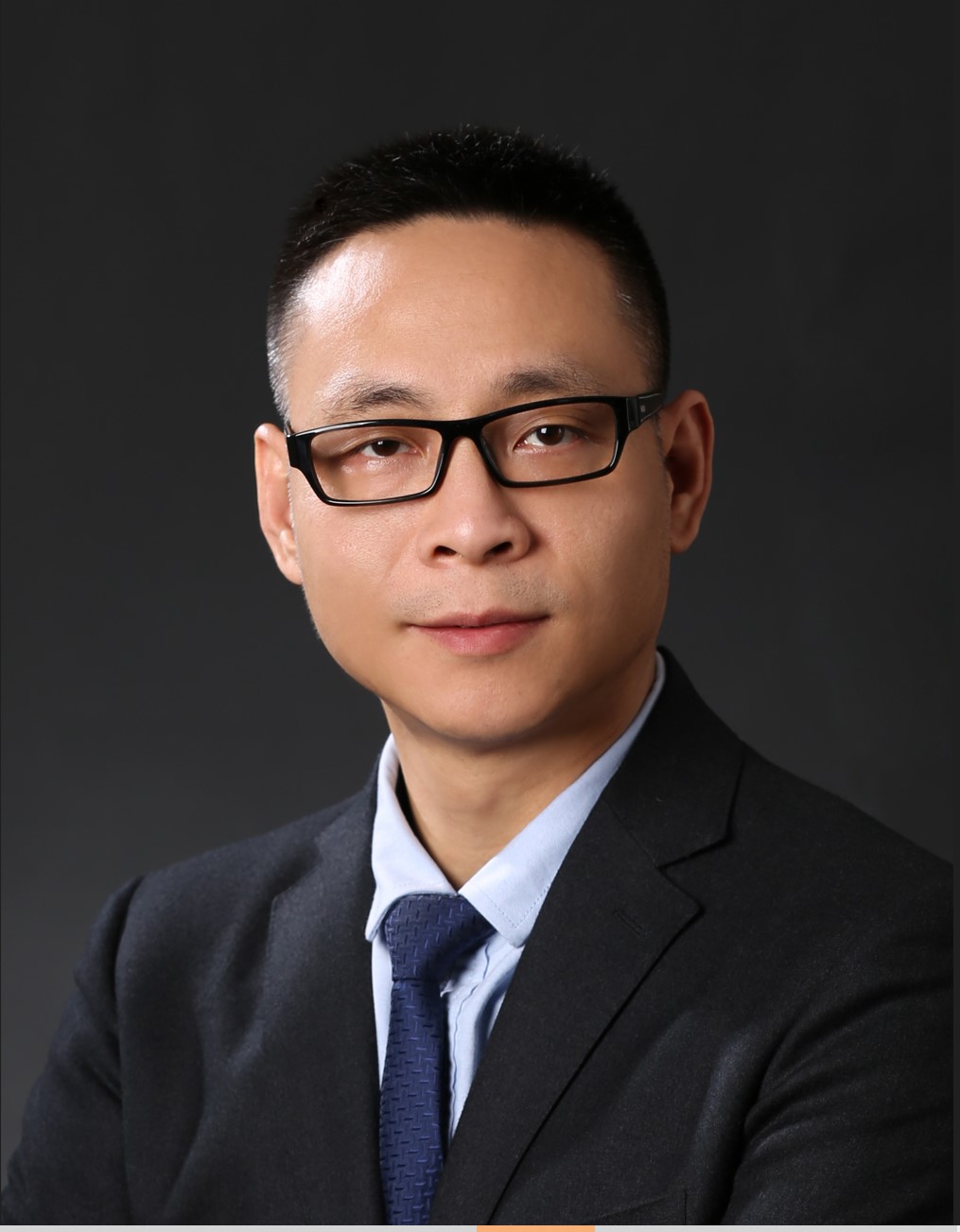}}]{Feng Liu} 
 (M’10–SM’18) received the B.Sc. and Ph.D. degrees in electrical engineering from Tsinghua University, Beijing, China, in 1999 and 2004, respectively. He is currently an Associate Professor of Tsinghua University. From 2015 to 2016, he was a visiting associate at California Institute of Technology, CA, USA. Dr. Feng Liu’s research interests include stability analysis, optimal control, robust dispatch and game theory based decision-making in energy and power systems. Dr. Liu is an IET Fellow. He is associated editor of several international journals including IEEE Transactions on Power Systems, IEEE Transactions on Smart Grid, and Control Engineering Practice. He also served as a guest editor of IEEE Transactions on Energy Conversion. 
\end{IEEEbiography}

\end{document}